\documentclass[12pt]{article}
\usepackage{mathrsfs}
\usepackage{amsmath}
\usepackage{float}
\usepackage{cite}
\usepackage{esint}
\usepackage[all]{xy}
\usepackage{amssymb}
\usepackage{amsfonts}
\usepackage{amsthm}
\usepackage{color}
\usepackage{graphicx}
\usepackage{hyperref}
\usepackage{bm}
\usepackage{indentfirst}
\usepackage{geometry}
\theoremstyle{plain}
\newtheorem{thm}{Theorem}[section]

\newtheorem{defn}[thm]{Definition}
\newtheorem{prop}[thm]{Proposition}

\newtheorem{cor}[thm]{Corollary}
\newtheorem{lem}[thm]{Lemma}

\newtheorem{them}[thm]{Theorem}
\theoremstyle{remark}
\newtheorem{ex}[thm]{Example}
\newtheorem{rmk}[thm]{Remark}
\newenvironment{thmbis}[1]
  {\renewcommand{\thethm}{\ref{#1}}%
   \addtocounter{thm}{-1}%
   \begin{them}}
  {\end{them}}

\newcommand{\R}{\mathbb{R}}

\newcommand{\C}{\mathbb{C}}
\newcommand{\D}{\mathbb{D}}


\numberwithin{equation}{section}

\title{The existence criterion of holomorphic discs for higher $A_\infty$ operations via minimal discs}
\author{Qiang Tan and Zuyi Zhang}
\date{ }

\begin{document}

\maketitle
\begin{abstract}
    The main theorem of the paper provides an existence criterion of holomorphic discs for higher $A_\infty$ operations. The key step is to show that if a minimal disc in a K\"ahler manifold with boundary in a sequence of Lagrangian submanifolds intersecting transversely such that its partial Maslov indices are either all no less than $1$ or all no larger than $-1$, then there is a holomorphic disc with the same image as this minimal disc. As a by-product, we show that all minimal discs in $\C\mathrm{P}^m$ with boundary on $\R\mathrm{P}^m$ are holomorphic.
\end{abstract}

\section{Introduction}

This paper is motivated by the $A_\infty$ category of symplectic manifolds arising from Lagrangian Floer theory. This category is equipped with a sequence of operations $m_l$, $l=1,2,3,\ldots$, satisfying the $A_\infty$ relations \cite{fukaya2009lagrangian}, where the objects of this category are unobstructed Lagrangian submanifolds. These operations are defined by counting elements of zero-dimensional moduli spaces obtained by solving the Cauchy–Riemann equation. When $l\ge2$, the moduli space consists of pseudo-holomorphic discs connecting intersection points of the given Lagrangians (Figure \ref{fig:mudisc}). And in this situation, the Maslov index of the discs is less or equal than $-2$. The main result of this paper gives a criterion for the existence of such discs on closed K\"ahler manifolds using partial Maslov indices.

\begin{them}\label{thm:main}
    Given a closed K\"ahler manifold $(X,\omega,J)$ and a sequence of Lagrangians $L_k$ with $k=1,\dots,n$ intersecting transversely in $X$, suppose there is a smooth disc $u$ connecting $\vec x=(x_1,\ldots,x_{n-1})$ and $y\in L_1\cap L_n$ with $x_k\in L_k\cap L_{k+1}$ for $k=1,\ldots,n-1$ $($Definition \ref{def:holodisc}$)$. Then there is a minimal disc $u_0$ connecting $\vec x$ and $y$ homotopic to $u$. If in addition, the partial Maslov indices $($Definition \ref{def:parmas}$)$ of $u_0$ are either all no less than $1$ or all no larger than $-1$, then there is a holomorphic map from $\D$ to $X$ with the same image as $u_0$.
\end{them}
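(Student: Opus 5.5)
The proof has two parts: an existence part for minimal discs, and a rigidity part showing that, under the sign condition on the partial Maslov indices, a minimal disc is holomorphic up to reparametrization.

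\emph{Existence.} I would minimize energy in the homotopy class of $u$. The admissible maps are discs $\D\to X$ whose boundary arcs $\partial_k\D$ lie in $L_k$ and whose boundary marked points go to $x_k$ and $y$. Since a genuinely conformal parametrization is needed, I would use the Sacks--Uhlenbeck/Schoen approach and minimize the Dirichlet energy $E(v)=\tfrac12\int_\D|dv|^2$ over $W^{1,2}$ maps with the free-boundary Lagrangian conditions. The corner points are fixed at the transverse intersection points, and the conformal parameters (positions of the marked points) are allowed to vary. Standard arguments give a minimizer that is harmonic and weakly conformal, hence a branched minimal disc. To rule out loss of compactness I would use the usual dichotomy: either a minimizing sequence converges, or energy concentrates and bubbles off a sphere or a disc with boundary on a single $L_k$. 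The energy of such a bubble is bounded below by a positive constant because $X$ is closed and the $L_k$ are compact. I would then use the Sacks--Uhlenbeck $\alpha$-energy perturbation to produce a minimizer in the homotopy class, possibly after replacing $u$ by a homotopic class modulo bubbles. This is the part that is most technical to carry out, but it is standard.

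Boundary regularity near the corners $x_k$ follows because transversality gives a well-defined corner angle. I would work in local coordinates $z=re^{i\theta}$ near each corner, where the Hopf-type expansion of $u_0$ shows that $u_0$ behaves like $z^{\alpha_k}$ for an angle exponent $\alpha_k$ determined by the Kähler angles between $T_{x_k}L_k$ and $T_{x_k}L_{k+1}$, modulo integers.

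\emph{Holomorphicity.} Write $\partial u_0=\partial u_0/\partial z\,dz$ and $\bar\partial u_0$. Since $X$ is Kähler and $u_0$ is harmonic, $\bar\partial u_0$ is a holomorphic section of $\overline{u_0^*TX}$ along $\D$; more precisely, $\nabla_{\partial_z}\bar\partial_{\bar z} u_0=0$, so $\bar\partial u_0$ is antiholomorphic in the appropriate sense. On each boundary arc, conformality together with the Lagrangian condition gives a totally real boundary condition for the section $\bar\partial u_0\,d\bar z$: its values lie in a rotated copy of $u_0^*TL_k$. Thus $\bar\partial u_0$ is a section of a Riemann--Hilbert bundle pair, and similarly for $\partial u_0$. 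The partial Maslov indices of $u_0$ (Definition \ref{def:parmas}) should be precisely the Maslov data of this bundle pair together with the corner exponents.

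The key step is a vanishing theorem. If all partial Maslov indices are $\le -1$, the index count forces the holomorphic section $\bar\partial u_0$ to have negative total degree. Counting zeros, with interior zeros weighted $2$, boundary zeros weighted $1$, and corner contributions determined by the exponents, then forces $\bar\partial u_0\equiv0$. I would carry this out by reflecting or doubling each boundary arc to get a meromorphic object on a sphere, or by a direct argument-principle count around $\partial\D$ with the winding split into arcs, each arc contributing its partial index. The case where all indices are $\ge 1$ is symmetric: apply the same argument to $\partial u_0$, so that $u_0$ is anti-holomorphic, and precomposing with $z\mapsto\bar z$ gives a holomorphic map with the same image.

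\emph{Main obstacle.} I expect the hardest step to be the corner analysis, namely matching the local exponents at $x_k$ and at $y$ to the partial Maslov index conventions so that the sign hypothesis really gives a strictly negative degree. The first thing I would try is to compute explicitly in the model of two transverse Lagrangian planes in $\C^m$, simultaneously diagonalized by a unitary change of frame.
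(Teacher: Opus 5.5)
Your holomorphicity argument takes a different route from the paper's. The paper never treats $\bar\partial u_0$ as a section. Instead it uses the diagonal form of Proposition~\ref{prop:standform} to get one nonzero holomorphic section of $(E,F_k)$ per coordinate when all $\kappa_j\ge1$, and feeds these $m$ sections into the Chen--Fraser criterion for the minimizing disc. It handles the negative case by changing coordinate to $w=\bar z$. A vanishing argument like yours could replace that criterion, but the identification it rests on is wrong, and the error reverses your conclusions.

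\textbf{The bundle pair carrying $\bar\partial u_0$.}
\begin{itemize}
\item By harmonicity, $\sigma:=(\partial_z u_0)^{0,1}=\overline{(\partial_{\bar z}u_0)^{1,0}}$ satisfies $\nabla_{\bar z}\sigma=0$. So $\sigma$ is a holomorphic section of $\overline{E}$, and with this operator $\overline{E}$ is holomorphically $E^*$.
\item The free-boundary conditions $\partial_\theta u_0\in TL_k$ and $\partial_r u_0\in J\,TL_k$ give $\zeta\,\sigma(\zeta)\in (J\,TL_k)^{0,1}$ on $I_k$.
\item Hence $\bar\partial u_0$ lives in the \emph{dual} pair twisted by $\bar\zeta$, whose partial indices (ignoring corners) are $-\kappa_j-2$. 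By contrast, $(\partial_z u_0)^{1,0}$ lives in $(E,\bar\zeta\, iF)$, with partial indices $\kappa_j-2$.
\end{itemize}
So your sign assignment is backwards: negative $\kappa_j$ enlarge the space containing $\bar\partial u_0$. For example, the holomorphic hemisphere in $(\C\mathrm{P}^1,\R\mathrm{P}^1)$ has $\kappa=2$. Its reparametrization $z\mapsto u(\bar z)$ is still area minimizing, but has $\kappa=-2$ and $\bar\partial\neq0$.

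\textbf{Total degree does not suffice.} In rank $m\ge2$, a negative total degree never kills a section, because one section only sees the largest partial index. For instance, $\mathrm{diag}(1,\zeta^{-5/2})\R^2$ has Maslov index $-5$, yet it carries the constant section $(1,0)$. You must split via Proposition~\ref{prop:standform} and need \emph{every} partial index of the relevant pair to be negative. The partial indices are attached to the $m$ line summands, not to the $n$ arcs.

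\textbf{Corners break the symmetry between the two cases.} With the signs corrected, the argument gives $\bar\partial u_0\equiv0$ whenever all $\kappa_j\ge-1$; for $m=1$ this survives the corners. So it plausibly reaches Case~1, with $u_0$ holomorphic. It cannot reach Case~2, and this is exactly the corner analysis you postponed.
\begin{itemize}
\item Suppose $TL_{k+1}=e^{i\theta}TL_k$ at a corner in a unitary frame, with $\theta\in(0,\pi)$. Then the holomorphic and antiholomorphic parts of $u_0$ behave like $z^{1-\theta/\pi}$ and $\bar z^{\theta/\pi}$, so both pieces of $du_0$ blow up there.
\item Relative to the clockwise canonical short path used to define $\mu$, the two pieces receive different corner corrections. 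For $m=1$, the argument principle says: weighted zero count plus corner excess equals $\mu-2+n$ for $(\partial_z u_0)^{1,0}$, but $-\mu-2$ for $\sigma$.
\end{itemize}
Concretely, cut a square out of a flat square torus by two horizontal and two vertical closed geodesics. Its holomorphic parametrization and the reparametrization $z\mapsto u_0(\bar z)$ both have Maslov index $-2$, so both satisfy the Case~2 hypothesis. Yet $\partial u_0\neq0$ for the first and $\bar\partial u_0\neq0$ for the second. Thus, under ``all $\kappa_j\le-1$'', no index count can force a prescribed piece to vanish.

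For $m=1$, conformality rescues the argument. The Hopf differential, a multiple of $g_{\C}((\partial_z u_0)^{1,0},\sigma)\,dz^2$, vanishes, so one factor vanishes identically. For $m\ge2$ this vanishing gives nothing, and your proposal offers no substitute.

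\textbf{Existence.} Your existence step only produces a minimizer ``modulo bubbles''. It therefore does not deliver a minimal disc homotopic to $u$, which is what the statement asserts.
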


\begin{rmk}
    On K\"ahler surfaces (real dimension 4), there are two partial Maslov indices for $u_0$. Therefore the Maslov index of $u_0$ is either greater than 2 or less than $-2$ in case the above theorem can be applied. It implies that the criterion only applies to $m_l,\ l\ge2$, for K\"ahler surface. Similarly, given a K\"ahler manifold with real dimension $2n$, the criterion above is applicable for $m_l,\ l\ge n$. \textbf{This is the reason why these holomorphic discs can correspond only to higher $A_\infty$ operations.}
\end{rmk}

Oh \cite{oh1995riemann} used the Brikhoff factorization to show that a Lagrangian subbundle of $E|_{\partial\D}$ has a standard form that is characterized by the partial Maslov indices, where $E$ is a holomorphic vector bundle over the unit disc $\D$. Unlike the Maslov index, partial Maslov indices are not topological and hard to compute. However, the positivity of the partial Maslov indices is equivalent to the Griffiths positivity (Definition \ref{def:grfpos}) of the double of the pullback bundle through $u_0$.

\begin{them}\label{thm:main2}
    Suppose $L_k$ with $k=1,\dots,n$ is a sequence of Lagrangians in a K\"ahler manifold $(X^m,\omega, J)$ intersecting transversely. Let $u:\D\rightarrow X$ be a disc connecting the intersections $\vec x,y$ of $\{L_k\}_k$. Let $E^D$ be the doubled holomorphic bundle defined as in Definition \ref{def:doubundle}, where $E=u^*T^{1,0}X$ and the boundary Lagrangian subbundle $F_k$ is $u^*T^{1,0}L_k|_{u(\partial\D)\cap L_k}$ $($Equation \ref{equ:10lagtan}$)$ for $k=1,\ldots,m$. Then the partial Maslov indices of $u$ are all larger than $1$ if and only if $E^D$ is Griffiths positive. $($The bundle $E$ is holomorphic by Lemma \ref{lem:holvec}.$)$
\end{them}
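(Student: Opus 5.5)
The plan is to reduce to a splitting statement on $\C\mathrm{P}^1$ and then compare the splitting type with Griffiths positivity. First I will recall the doubling construction of Definition \ref{def:doubundle}. The pair $(E,\{F_k\})$ over $\D$, with totally real boundary data, is glued with its conjugate $(\bar E,\{\bar F_k\})$ along $\partial\D$. This produces a holomorphic bundle $E^D$ over the Riemann sphere $\D\cup_{\partial\D}\bar\D\cong\C\mathrm{P}^1$. Transversality of the $L_k$ at the corners is what makes the gluing across the switching points legitimate, presumably after the standard local twisting at each corner that the definition encodes.

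By Oh's Birkhoff factorization \cite{oh1995riemann}, the boundary problem $(E,\{F_k\})$ splits holomorphically into a direct sum of rank-one problems $(\D,\C,\lambda_i)$ with $i=1,\ldots,m$. Their Maslov indices are exactly the partial Maslov indices $\mu_1,\ldots,\mu_m$ of Definition \ref{def:parmas}. Doubling commutes with direct sums, so $E^D\cong\bigoplus_i \mathcal O(\mu_i)$ (with whatever normalization of degree the doubling produces for a rank-one problem of Maslov index $\mu_i$). The first key step is to check this identification of Grothendieck's splitting type of $E^D$ with the partial Maslov indices, keeping careful track of how degree relates to $\mu_i$.

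Next I will characterize Griffiths positivity on $\C\mathrm{P}^1$. A direct sum $\bigoplus\mathcal O(a_i)$ admits a Griffiths positive hermitian metric if and only if all $a_i>0$. The forward direction uses the fact that a positive bundle has positive quotients, hence positive degree on every quotient line bundle $\mathcal O(a_i)$. The reverse direction uses the Fubini–Study induced metrics on each summand, since a direct sum of positive line bundles is Griffiths positive. Combining this with the splitting gives the equivalence, provided the degree normalization matches the threshold stated in the theorem. If the doubling produces degree $\mu_i$ itself, then positivity corresponds to $\mu_i\ge 1$. If the corner twists shift the degree, the threshold shifts accordingly. So I will recheck whether the intended condition is "$>1$" or "$\ge1$" against Definition \ref{def:doubundle}.

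The main obstacle I expect is the first step: proving that doubling is compatible with Oh's splitting when the boundary condition is only piecewise smooth, with jumps at the $x_k$ and $y$. Concretely, I must show that the holomorphic splitting of $E$ over $\D$ respects the reflected structure so that it extends to a splitting of $E^D$. I would first try to use the uniqueness part of Oh's theorem on each summand, after smoothing the corners by a small rotation path inside the transverse Lagrangian pair. I would then check that this smoothing changes each partial index by the same amount as the corresponding degree contribution in $E^D$.
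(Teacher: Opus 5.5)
Your proposal is correct and is essentially the paper's argument. The paper gets $E^D\cong\bigoplus_k\mathcal O(\kappa_k)$ directly from its construction: the partial indices and $E^D$ are defined from the same twisted loop $v_1\circ\cdots\circ v_n$, and $E^D$ is glued via $\Theta^{-1}$, $\overline{\Theta}$ and the clutching function $\Lambda=\mathrm{diag}(\zeta^{\kappa_1},\ldots,\zeta^{\kappa_m})$, so the corner issue you anticipate is absorbed into the definitions and no degree shift occurs. It then uses quotients for the forward direction and direct sums for the converse, exactly as you do.

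Your suspicion about the threshold is also right. Since the degree is exactly $\kappa_k$, what is actually proved is the equivalence with $\kappa_k\ge 1$ (the paper's own proof concludes $\kappa_k>0$). So ``larger than $1$'' in the statement must be read as ``no less than $1$'', as in the abstract and Theorem \ref{thm:main}. Taken literally the statement fails, since all $\kappa_k=1$ gives the Griffiths positive bundle $\bigoplus_k\mathcal O(1)$.
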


As a by-product, we can describe all minimal discs in $\C\mathrm{P}^m$ with boundary in $\R\mathrm{P}^m$. The proof is presented in Section \ref{sec:3}

\begin{cor}\label{cor:minidisccpm}
    Suppose $\C\mathrm{P}^m$ is equipped with the Fubini-Study metric. Then all minimal discs in $\C\mathrm{P}^m$ with boundary in $\R\mathrm{P}^m$ are all holomorphic. As a result, these discs are contained in a $\C\mathrm{P}^1\subset \C\mathrm{P}^m$.
\end{cor}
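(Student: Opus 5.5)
Let $u_0\colon\D\to\C\mathrm{P}^m$ be a minimal disc with $u_0(\partial\D)\subset\R\mathrm{P}^m$. The plan is to reduce Corollary \ref{cor:minidisccpm} to the second half of Theorem \ref{thm:main}, in the one-Lagrangian case $n=1$ with $L_1=\R\mathrm{P}^m$ and no corners. The idea is to show that the partial Maslov indices of $u_0$ are automatically at least $1$, which we verify through the Griffiths-positivity criterion of Theorem \ref{thm:main2}.

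\emph{Step 1: Griffiths positivity.} Put $E=u_0^*T^{1,0}\C\mathrm{P}^m$ with boundary Lagrangian subbundle $F=u_0^*T^{1,0}\R\mathrm{P}^m$. By Lemma \ref{lem:holvec}, $E$ is holomorphic, using that $u_0$ is minimal, hence harmonic. Since $\R\mathrm{P}^m$ is the fixed set of the antiholomorphic isometric involution $c\colon[z]\mapsto[\bar z]$, I expect the doubled bundle $E^D$ of Definition \ref{def:doubundle} to be exactly the pullback of $T^{1,0}\C\mathrm{P}^m$ under the doubled map $\hat u\colon\C\mathrm{P}^1\to\C\mathrm{P}^m$, where $\hat u(z)=c(u_0(1/\bar z))$ for $|z|>1$. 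Reflection across a totally geodesic fixed set preserves harmonicity, so $\hat u$ is harmonic. The Fubini--Study curvature has positive holomorphic bisectional curvature, so $T^{1,0}\C\mathrm{P}^m$ is Griffiths positive with the Chern connection. Pulling back along a holomorphic structure induced by $\hat u$ should keep the curvature term $\langle R(\xi,\bar\xi)v,\bar v\rangle$ positive: the $(1,1)$-part of the pulled-back curvature is $R(\partial\hat u,\overline{\partial\hat u})$ plus a term involving $\bar\partial\hat u$, and in both pieces bisectional positivity gives the correct sign.

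\emph{Main obstacle.} The delicate part of Step 1 is positivity at points where $du_0$ degenerates, and also checking that the doubled Hermitian structure is smooth across $\partial\D$. At branch points the pulled-back curvature is only semipositive, so I would first try to show that the degeneracy set is finite and that strict positivity there is not actually needed. Concretely, I would use a Grothendieck splitting $E^D\cong\bigoplus\mathcal O(a_i)$ together with the semipositive curvature form: the degree of each line subbundle is the integral of the curvature, which is strictly positive away from finitely many points, and this forces $a_i\ge1$. By Oh's correspondence \cite{oh1995riemann}, these $a_i$ are the partial Maslov indices. In particular this gives the conclusion of Theorem \ref{thm:main2} (indices $\ge1$) directly, avoiding any issue with its strict form.

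\emph{Step 2: holomorphicity.} Theorem \ref{thm:main} now yields a holomorphic disc $v$ with the same image as $u_0$. To conclude that $u_0$ itself is holomorphic, I use that a minimal disc is weakly conformal and harmonic. Its image agrees with that of a holomorphic curve, so the tangent planes are complex lines. A conformal harmonic map with $J$-invariant image plane is either holomorphic or antiholomorphic on each open set, and by unique continuation this choice is global. Precomposing with $z\mapsto\bar z$ if necessary, $u_0$ is holomorphic.

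\emph{Step 3: the image lies in a line.} Double $u_0$ to the holomorphic sphere $\hat u$. Its boundary lies in $\R\mathrm{P}^m$, and $\hat u$ is $c$-equivariant. The span of the image of $\hat u$ is a $c$-invariant projective subspace $P$. To show $\dim P=1$, I would use that the Maslov/degree count for a minimal disc of area $\le\pi/2$ forces degree one, or alternatively use energy minimality in the homotopy class. I expect this to be handled by Calabi rigidity for minimal $2$-spheres of low area in $\C\mathrm{P}^m$. This last point is the piece I am least sure of, and I would check how the paper uses the minimal-disc existence to fix the degree.
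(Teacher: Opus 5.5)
Your Step~1 uses the same mechanism as the paper's proof. You double by the conjugation $c$, assert that $E^D=\hat u^*T^{1,0}\C\mathrm{P}^m$ is Griffiths positive because $T\C\mathrm{P}^m$ is, and then invoke Theorems~\ref{thm:main2} and~\ref{thm:main}. It fails exactly at the point you tried to justify, and the paper's one-line justification has the same gap. The Chern connection of $\hat u^*T^{1,0}\C\mathrm{P}^m$ is $\hat u^*\nabla$. Put $\alpha=(\partial_z\hat u)^{1,0}$ and $\beta=(\partial_{\bar z}\hat u)^{1,0}$. Since the K\"ahler curvature $R$ has type $(1,1)$,
\[
F_{\hat u^*\nabla}(\partial_z,\partial_{\bar z})=R(\alpha,\bar\alpha)-R(\beta,\bar\beta).
\]
So the $\bar\partial\hat u$ term enters with the \emph{opposite} sign, and positivity requires the holomorphic part of $d\hat u$ to dominate. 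That is essentially the conclusion you are after. For instance, an antiholomorphic degree $-1$ map $S^2\to\C\mathrm{P}^1$ pulls $T^{1,0}\C\mathrm{P}^1$ back to a bundle of degree $-2$.

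No care at branch points can rescue this. Take $u(x)=[x_0:x_1:ix_2]$ on the closed hemisphere $\{x\in S^2: x_2\ge 0\}$. It is a totally geodesic, conformal, harmonic immersed disc meeting $\R\mathrm{P}^2$ orthogonally along $\R\mathrm{P}^1$. It is totally real, so not holomorphic. Its double is $x\mapsto[x_0:x_1:ix_2]$ on all of $S^2$, so $\deg E^D=\mu(u)=0$. Its boundary class is trivial, so it does not contradict the first assertion for $\mathcal D$-minimizers in the paper's sense. It does refute your Step~1, which used only harmonicity. Separately, line \emph{sub}bundles do not inherit positivity; only quotients do.

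What actually proves the first assertion for minimizers is the energy identity $\mathcal D[u]=\int_{\D}|\bar\partial_J u|^2+\omega(A)$. Here $\omega(A)$ is topological because $\omega|_{\R\mathrm{P}^m}=0$. For $m\ge 2$, every class of positive area contains a holomorphic disc, e.g.\ $z\mapsto[1:z^k:0:\cdots:0]$ on the upper half plane. Hence a minimizer in such a class satisfies $\bar\partial_J u=0$, and in a class of negative area it is antiholomorphic. No partial Maslov indices are needed.

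Step~3 cannot be completed, because the second sentence of the corollary is false. The paper's proof does not address it either; it ends with ``Thus the corollary follows''. Consider the real twisted cubic $z\mapsto[1:z:z^2:z^3]$ on the closed upper half plane together with $\infty$. It is an embedded holomorphic disc with boundary on $\R\mathrm{P}^3$. It meets $\R\mathrm{P}^3$ orthogonally and minimizes $\mathcal D$ in its class by the energy identity. Its boundary loop is nontrivial in $\pi_1(\R\mathrm{P}^3)$: a lift to $S^3$ runs from $(0,0,0,-1)$ to $(0,0,0,1)$. Yet its image spans $\C\mathrm{P}^3$, so by the identity theorem it lies in no $\C\mathrm{P}^1$. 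Nothing in the hypotheses bounds the area, and no Calabi-type rigidity forces a real rational curve to be a line. The correct conclusion is only that a minimizing disc is (anti)holomorphic, i.e.\ half of a real rational curve.
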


The use of pseudo-holomorphic curves to study symplectic manifold was introduced by Gromov \cite{gromov1985pseudo}. Then Floer \cite{floer1988morse} defined the Lagrangian Floer homology to solve a version of Arnold's conjecture, where the differential of this homology is defined by counting the number of pseudo-holomorphic discs. Fukaya discovered the $A_\infty$ structure in Lagrangian Floer theory and it was later called the Fukaya category. However, Lagrangian Floer homology and Fukaya category may not be well defined in general because of the bubbling phenomena. Fukaya-Oh-Ohto-Ono \cite{fukaya2009lagrangian} used algebraic methods to find out the obstructions to defining the $A_\infty$ structures. There has been extensive research on the Fukaya category. Abouzaid \cite{abouzaid2008fukaya} gives a thorough description of the Fukaya categroy on higher-genus surfaces. Hedden-Herald-Hogancamp-Kirk \cite{hedden2020fukaya} calculate examples for the pillowcase. Cazassus provides a construction of equivariant Lagrangian Floer homology using the Morse model \cite{cazassus2024equivariant}. The second author gives a combinatorial way for computing the boundary maps in Lagrangian Floer theory when the symplectic manifold is a Cartesian product of closed surfaces \cite{zhang2024construction,zhang2025uniqueness}. The references on the Fukaya category include Auroux \cite{auroux2014beginner} and Seidel \cite{seidel2008fukaya}.\\


For another point of view, there are many studies on constructing pseudo-holomorphic curves. Gromov showed the existence of pseudo-holomorphic curves by analyzing the $\bar\partial$ operator directly \cite{gromov1985pseudo}. Siu-Yau \cite{siu1980compact}, Micallef \cite{micallef1982stable}, and Arezzo \cite{arezzo1998stable} proved the existence of holomorphic curves using stable minimal surfaces. Recently, Chen \cite{chen2025holomorphic} tried to establish the existence of holomorphic curves in an open K\"ahler surface with its boundary on a given Lagrangian submanifold. This work is based on the existence of minimal surface of Ye \cite{ye1991existence}.

We use the idea from Chen \cite{chen2025holomorphic} and Chen-Fraser \cite{chen2015minimal}. Compared with \cite{chen2025holomorphic} \cite{chen2015minimal}, the ambient manifold in the present paper is a closed K\"ahler manifold. So the existence of the minimal surface is relatively straightforward. The boundary of the holomorphic curve in the current paper lies on $n$ Lagrangian submanifolds, where the $n$-th unit roots are mapped to prescribed intersection points of these Lagrangians (Figure \ref{fig:mudisc}). The method used in \cite{chen2025holomorphic} \cite{chen2015minimal} requires the kernel of the linearized Cauchy-Riemann operator to have dimension at least 2, and therefore imposes restrictions on the partial Maslov indices.\\

The arrangement of this paper is as follows. In Section \ref{sec:1}, we provide the necessary background of symplectic geometry. Section \ref{sec:2} aims at proving the existence of minimal surface with the prescribed Lagrangian boundary condition. Section \ref{sec:partms} defines the partial Maslov indices and Theorem \ref{thm:main2} is proved. The proof of Theorem \ref{thm:main} and Corollary \ref{cor:minidisccpm} is given in Section \ref{sec:3}.\\

\noindent\textit{Acknowledgements:} The second author would like to thank Jingyi Chen for enlightening conversations, as well as Jie Zhou and Kenji Fukaya for helpful suggestions.

\section{Preliminary}\label{sec:1}

The goal of this section is to provide the necessary background on holomorphic discs in K\"ahler manifolds whose boundaries lie on given Lagrangian submanifolds. These Lagrangians are assumed to intersect transversely. After defining the Cauchy-Riemann operator, we give the definition of the Maslov index.  For generic complex structures on K\"ahler manifolds, the moduli space of holomorphic discs (Definition \ref{def:holodisc}) is generically a smooth manifold whose dimension can be computed from the Maslov index and the number of marked points (corresponding the unit roots in Definition \ref{def:holodisc}) by the Riemann-Roch theorem \cite[Equation~2.5]{auroux2014beginner}. 

\begin{defn}
    A \textbf{K\"ahler manifold} is a triple $(X,\omega, J)$, where $X$ is a closed manifold equipped with a complex structure $J$ and a symplectic form $\omega$ such that $\omega(\cdot,J\cdot)>0$ and $\omega(J\cdot,J\cdot)=\omega(\cdot,\cdot)$.
\end{defn}

In Lagrangian Floer theory, the Lagrangian submanifolds should intersect transversely.

\begin{defn}
    Given a symplectic manifold $(X, \omega)$, a \textbf{Lagrangian} of $X$ is an embedded submanifold $l:L\rightarrow X$ such that $l^*\omega=0$.
\end{defn}

\begin{defn}
    Given two submanifolds $N_1$ and $N_2$ of a smooth manifold $X$, $x\in N_1\cap N_2$ is said to be a \textbf{transversal intersection} if $T_xN_1+T_xN_2=T_xX$. If all the intersections of $N_1$ and $N_2$ are transversal intersections, then we say that $N_1$ and $N_2$ \textbf{intersect transversely}.
\end{defn}

\begin{ex}
    Suppose $L_1$ and $L_2$ are two Lagrangians in a K\"ahler surface $(X,\omega, J)$ intersecting transversely. Note that $\dim L_1=\dim N_2=2$, then $L_1$ and $L_2$ intersecting at $x\in L_1\cap L_2$ if and only if $T_xL_1\cap T_xL_2=\{0\}$ by dimension counting.
\end{ex}

\begin{defn}
    Suppose $(X, \omega, J)$ is a K\"ahler manifold. Let $u:\D\rightarrow X$ be a smooth map from the unit disc $\D\subset \C$. The \textbf{Cauchy-Riemann operator} is defined as 
    \[
    \bar\partial_J(u):=\frac{1}{2}(du+J\circ du\circ j),
    \]
    where $j$ is the standard complex structure inherent from $\C$.
\end{defn} 

Now we are ready to define holomorphic discs connecting the intersections of given Lagrangians.

\begin{defn}\label{def:holodisc}
    Suppose $L_k$ with $k=1,\dots,n$ is a sequence of Lagrangians in a K\"ahler manifold $(X,\omega, J)$ intersecting transversely. Denote $\D$ as the unit disc in $\C$. Let $x_k\in L_k\cap L_{k+1}$ and $y\in L_1\cap L_n$ be intersection points with $k=1,\ldots,n-1$. \textbf{A disc $u$ connecting $\vec{x}:=(x_1,\ldots,x_{n-1})$ and $y$} is a smooth map $u:(\D,\partial\D)\rightarrow (X,\cup_k L_k)$ such that 
    \begin{itemize}
        \item $u(e^{\frac{2\pi k}{n}i})=x_k$ with $k=1,\ldots,n-1$, $u(1)=y$, and $u(I_k)\subset L_k$ with $k=1,\ldots,n$. Here $I_k$ stands for the circle segment in $\partial\D$ between the complex unit roots $e^{\frac{2\pi (k-1)}{n}i}$ and $e^{\frac{2\pi k}{n}i}$ $($Figure \ref{fig:mudisc}$)$.
    \end{itemize}
    If moreover 
    \begin{itemize}
        \item $\bar\partial_Ju=0$,
    \end{itemize}
    then $u$ is a \textbf{holomorphic disc connecting $\vec x$ and $y$}. The moduli space of all such holomorphic discs is denoted as $\mathcal{M}(\vec x,y)$.
\end{defn}

\begin{rmk}
    In fact, the unit roots chosen here are not essential. We can replace the unit roots by any $n$ ordered marked points.
\end{rmk}

\begin{figure}[H] 
\centering 
\includegraphics[width=1\textwidth]{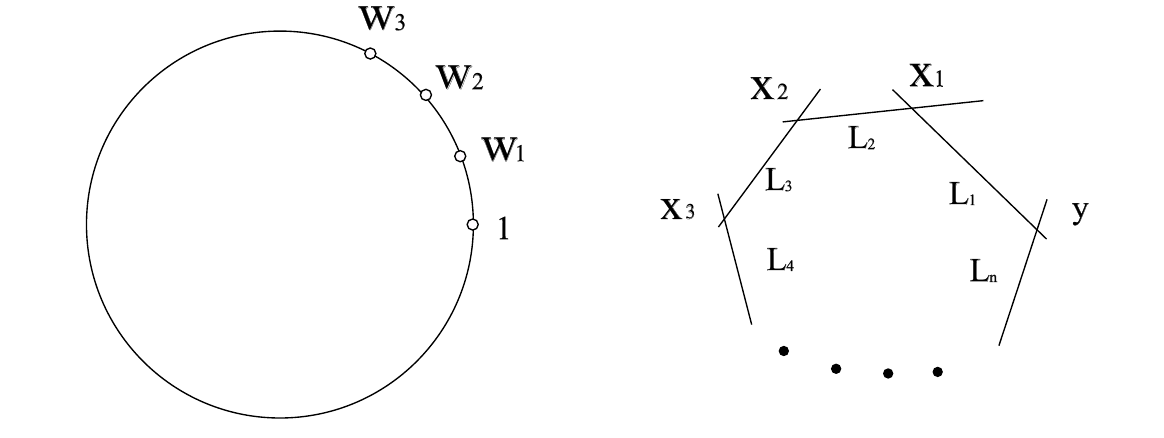}
\caption{In the picture, $w_k=e^{\frac{2\pi k}{n}i}$ is mapped to $x_k$ and $1$ is mapped to $y$.}
\label{fig:mudisc}
\end{figure}

To better understand the Cauchy-Riemann operator, the following description is needed. Let $TX\otimes\C$ be the complexified tangent bundle of a complex manifold $(X,J)$ with complex dimension $m$. The complex structure $J$ extends $\C$-linearly to $TX\otimes\C$. Since $J^2=-1$, $TX\otimes\C$ splits as the direct sum of $T^{0,1}X$ and $ T^{1,0}X$ by the $J$ action, where $T^{0,1}X$ and $ T^{1,0}X$ are the eigenspaces of $J$ with eigenvalues $-i$ and $i$ respectively.

Given a smooth map $u=(u_1,\ldots,u_m):\D\rightarrow X$ from the unit disc in $\C$, denote $E$ as the pull-back bundle $u^*T^{1,0}X$. Taking $s+it$ as the coordinate on $\D$, the Cauchy-Riemann operator can be viewed as an $E$-valued $(0,1)$-form:
\[
\bar\partial_J u_k=(\frac{1}{2}(\partial_su_k+J(u)\partial_tu_k)ds+\frac{1}{2}(\partial_tu_k+J(u)\partial_su_k)dt)\quad k=1,2,\ldots,m.
\]

The next goal is to define the Maslov index. This index is related to the Fredholm index of the linearized Cauchy-Riemann operator and the dimension of the moduli space of holomorphic discs. More details can be found in McDuff-Salamon \cite{mcduff2017introduction} \cite{mcduff2025j} and Auroux \cite{auroux2014beginner}.

Let $(\C^m,\omega_{std})$ be the symplectic manifold equipped with the standard symplectic form on $\C$. Denote $\Lambda(m)$ as the Lagrangian Grassmannian of $\C^m$, that is, the space of Lagrangian subspaces of $(\C,\omega_{std})$. In fact, $\Lambda(m)\cong \mathrm{U(m)/O(m)}$ \cite{mcduff2017introduction}.

\begin{defn}
    Let $\tau:S^1\rightarrow\Lambda(m)$ be a smooth loop in $\Lambda(m)$. The \textbf{Masolv index $\mathbf{\mu(c)}$ of $\mathbf{\tau}$} is defined as the topological degree of the composition of the following map:
    \[
    S^1\xrightarrow{\tau}\Lambda(m)\cong\mathrm{U(m)/O(m)}\xrightarrow{\det^2} S^1,
    \]
    where the last arrow stands for mapping a class $[a]\in\mathrm{U(m)/O(m)}$ to $\det(a)^2$ $($ clearly this map is independent of the choice representing the class$)$.
\end{defn}

The Maslov index needed in this paper is slightly different from the one described above and is explained in the following. Given a symplectic manifold $(X,\omega)$ and a sequence of Lagrangians $L_k$ with $k=1,\dots,n$ intersecting transversely in $X$. Let $u:\D\rightarrow X$ be a disc connecting $\vec x=(x_1,\ldots,x_{n-1})$ and $y$, as in Definition \ref{def:holodisc}. However, the Maslov index of the boundary loop $u|_{\partial\D}$ is not defined, since this loop is not smooth at $\vec x$ and $y$. By the Darboux theorem, there is a neighborhood $U$ of each point $x_k $ and $ y$ that is symplectomorphic to $(\C^m,\omega_{std})$, such that the images of $L_k$ and $L_{k+1}$ (set $L_{n+1}=L_1$) are $\R^m$ and $i\R^m$, respectively. Viewing the Lagrangian subspaces $\R^m$ and $i\R^m$ as two points in $\Lambda(m)$, there is a \textbf{canonical short path} connecting them (c.f. \cite{auroux2014beginner}). It is given by
\begin{align*}
    \lambda_{x_k}:[0,1]&\rightarrow\Lambda(m)\\
    t&\mapsto e^{-\frac{\pi i}{2}t}\R^m.
\end{align*}
To define the Maslov index of $u$, observe that the tangent space of ${u(z)}\in \cup_kL_k$ can be regarded as a point in $\Lambda(m)$ when $z\in \partial\D-\{e^{\frac{2\pi k}{n}i}\}_k$. Therefore, $u|_{I_k}$ induces a map, still denote by $u|_{I_k}$, from $I_k$ to $\Lambda(m)$, where $\{I_k\}_{k=1}^n$ stands for the connected components of $\partial\D-\{e^{\frac{2\pi k}{n}i}\}$ ordered counterclockwise starting at 1. Combining this with the canonical short path, we can define the Maslov index of $u$.
\let\oldthethm\thethm
\begin{defn}[\cite{auroux2014beginner}]
    Suppose $L_k$ with $k=1,\dots,n$ is  a sequence of Lagrangians in a K\"ahler manifold $(X,\omega, J)$ intersecting transversely. The canonical short paths at $x_k$ and $y$ are denoted as $\lambda_{x_k}$ and $\lambda_y$, respectively. Let $u:\D\rightarrow X$ be a disc connecting the intersections $\vec x=(x_1,\ldots,x_{n-1}),y$ of $\{L_k\}_k$, then the \textbf{Maslov index of $\mathbf{u}$} is defined as
    \[
    \mu(u):=\mu( u|_{I_1}^{-1}\circ\lambda_{x_1}\circ\ldots\circ u|_{I_{n-1}}^{-1}\circ\lambda_{x_{n-1}}\circ u|_{I_n}^{-1}\circ\lambda_y),
    \]
    where $u|_{I_k}^{-1}$ stands for travelling along $I_k$ counterclockwisely.
\end{defn}

Since the Maslov index is a topological invariant \cite{mcduff2017introduction} \cite{robbin1993maslov}, $u|_{I_k}^{-1}\circ\lambda_{x_k}$ can be isotoped to $v_k$ for $k=1,\ldots,n$. Here $v_k$ is defined as multiplying $u|_{I_k}^{-1}(t)\subset\C^m$ by the diagonal matrix $\Phi_k(t)=\begin{bmatrix}
    e^{-\frac{\pi i}{2|I_k|}t}&0&\ldots&0\\
    0&e^{-\frac{\pi i}{2|I_k|}t}&\ldots&0\\
    ..&..&..&..\\
    0&0&\ldots&e^{-\frac{\pi i}{2|I_k|}t}
\end{bmatrix}\in\mathrm{GL}(\C,m)$ with $t\in I_k$. Therefore
\[
\mu( u|_{I_1}^{-1}\circ\lambda_{x_1}\circ\ldots\circ u|_{I_{n-1}}^{-1}\circ\lambda_{x_{n-1}}\circ u|_{I_n}^{-1}\circ\lambda_y)=\mu(v_1\circ\ldots\circ v_n),
\]
and we have an equivalent definition below. This definition is easier to use when defining the partial Maslov index, which is essential to determine the existence of holomorphic curves.

\addtocounter{thm}{-1}
\renewcommand{\thethm}{\oldthethm$'$}
\begin{defn}\label{def:msl2}
    Suppose $L_k$ with $k=1,\dots,n$ is  a sequence of Lagrangians in a K\"ahler manifold $(X,\omega, J)$ intersecting transversely. Let $u:\D\rightarrow X$ be a disc connecting the intersections $\vec x,y$ of $\{L_k\}_k$, then the \textbf{Maslov index of $\mathbf{u}$} is defined as
    \[
    \mu(u):=\mu(v_1\circ\ldots\circ v_n).
    \]
\end{defn}
\renewcommand{\thethm}{\oldthethm}

\begin{rmk}
    More details of the Maslov index can be found in Auroux \cite{auroux2014beginner}, Chan-Suen \cite{chan2019syz}, Mcduff-Salamon \cite{mcduff2017introduction}, and Robbin-Salamon \cite{robbin1993maslov}.
\end{rmk}

The Maslov index provides the virtual dimension of holomorphic discs.
\begin{them}[{\cite[Equation~2.5]{auroux2014beginner}}]
    Suppose $L_k$ with $k=1,\dots,n$ is  a sequence of Lagrangians in a K\"ahler manifold $(X,\omega, J)$ intersecting transversely. The virtual dimension $\dim\mathcal{M}(\vec x,y)$ of the holomorphic discs $u$ connecting $\vec x$ and $y$ as in Definition \ref{def:holodisc} is 
    \[
    \dim\mathcal{M}(\vec x,y)=n-3+\mathrm{ind}(D_u\bar\partial_J)=n-3+\mu(u),
    \]
    where $D_u\bar\partial_J$ is the first variation of $\bar\partial_J$ at $u$.
\end{them}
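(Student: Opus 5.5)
The statement is the index formula $\dim\mathcal{M}(\vec x,y)=n-3+\mathrm{ind}(D_u\bar\partial_J)=n-3+\mu(u)$. I would prove it in two independent steps. First, the marked-point count gives $n-3$. Second, a Riemann--Roch computation with corner corrections gives $\mathrm{ind}(D_u\bar\partial_J)=\mu(u)$.

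\textbf{Step 1: the term $n-3$.} Definition \ref{def:holodisc} fixes the marked points at the $n$-th roots of unity. Freeing them instead, one should parametrize discs with $n$ boundary marked points in counterclockwise order and quotient by $\mathrm{Aut}(\D)\cong\mathrm{PSL}(2,\R)$, which has dimension $3$; this is the freedom alluded to in the Remark after Definition \ref{def:holodisc}. The configuration space of such points modulo automorphisms has dimension $n-3$, provided $n\ge 3$ or the automorphism group is dealt with appropriately. A standard fibration argument then gives that the virtual dimension is $\mathrm{ind}(D_u\bar\partial_J)+n-3$. Here $D_u\bar\partial_J$ is the linearization on sections of $u^*TX$ with boundary values in $u^*TL_k$ along $I_k$, taken in weighted Sobolev spaces near the corners, or equivalently after conformally regarding the punctures as strip-like ends.

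\textbf{Step 2: $\mathrm{ind}=\mu(u)$.} Near each corner $x_k$, use the Darboux chart in which $L_k,L_{k+1}$ become $\R^m,i\R^m$; transversality makes the linearized operator Fredholm there. Then deform the boundary condition as in the passage leading to Definition \ref{def:msl2}: multiplying by $\Phi_k(t)$ along $I_k$ turns the piecewise Lagrangian boundary condition, with its jumps at the corners, into a continuous loop $v_1\circ\cdots\circ v_n$ in $\Lambda(m)$. The canonical short path rotates by $e^{-\pi i t/2}$, and this choice is exactly what makes the jump from $\R^m$ to $i\R^m$ contribute zero correction; this is the convention under which the corner term is absorbed. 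One then invokes the Riemann--Roch theorem for a disc with totally real boundary condition $F$, namely $\mathrm{ind}=m+\mu(F)$, and tracks how the corner contributions convert this to the stated formula. Concretely, each corner strip with the short-path convention contributes the index of the model operator on a half-strip with boundary $\R^m$, $i\R^m$, and gluing these models shifts the index by $-m$. Finally, by homotopy invariance of the Fredholm index under the deformation $\Phi_k$, the index equals $\mu(v_1\circ\cdots\circ v_n)=\mu(u)$.

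\textbf{Main obstacle.} The delicate point is the bookkeeping of the constant: getting $\mu(u)$ exactly, rather than $\mu(u)+m$ or a shift depending on $n$, requires checking sign and direction conventions. In particular one must verify that the short path is traversed in the positive direction and that the weighted spaces at the corners are the right ones. I would calibrate by checking the constant in the model case $n=2$ ($m_1$, the Floer strip), where the index should be $\mu$ and the moduli space is quotiented by $\R$-translation, giving $\mu-1$. Since this is the result cited from \cite[Equation~2.5]{auroux2014beginner}, I would ultimately defer to that convention.
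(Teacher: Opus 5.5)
The paper gives no proof here; it only quotes Auroux. So your sketch has to stand on its own. Step 1 is fine. You are also right that with the marked points fixed at the roots of unity no $n-3$ appears, so $\mathcal{M}(\vec x,y)$ must be read with free marked points modulo $\mathrm{Aut}(\D)$.

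The gap is in Step 2, at the sentence ``gluing these models shifts the index by $-m$''. Nothing in your setup produces this shift, and it contradicts the sentence just before it. Suppose, as you say, the clockwise short path $e^{-\pi i t/2}$ makes every corner contribute zero correction. (This is correct: in the model chart the admissible local solutions at a corner behave like $z^{1/2}$, which is exactly the limit of solutions for the boundary condition smoothed by a clockwise quarter turn.) Then the operator with corners has the same index as the smooth disc problem with boundary loop $v_1\circ\cdots\circ v_n$, and Riemann--Roch gives
\[
\mathrm{ind}(D_u\bar\partial_J)=m+\mu(v_1\circ\cdots\circ v_n)=m+\mu(u).
\]
A net $-m$ could only come from treating one corner differently from the others, and you never say which corner or why.

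This is not a bookkeeping slip you can repair. The paper's $\mu(u)$ uses the same clockwise quarter turn at all $n$ corners, including at $y$, where the chart sends $L_n\mapsto\R^m$ and $L_1\mapsto i\R^m$. With that definition, the identity $\mathrm{ind}(D_u\bar\partial_J)=\mu(u)$ is off by $m$. The calibration you propose but do not carry out already shows this:

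\textbf{Constant strip.} Take $n=2$ and the constant disc at a transverse point $p\in L_1\cap L_2$. The two corners give $\mu(u)=-m$. But $D_u$ is the translation-invariant operator $\partial_s+J\partial_t$ on $\R\times[0,1]$ with boundary conditions $\R^m$, $i\R^m$ and nondegenerate asymptotics, which is an isomorphism of index $0$.

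\textbf{Convex triangle.} Take an embedded convex triangle in $\C$ ($m=1$) with exterior angles $\epsilon_k$. The paper's $\mu$ is $\sum_k(\epsilon_k-\pi)/\pi=-1$. Yet the triangle is rigid and regular, so its index is $0$.

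The formula $\mathrm{ind}=\mu$ holds for a different convention: close the outgoing corner $y$ by the reverse of the short path from $T_yL_1$ to $T_yL_n$, a counterclockwise quarter turn in the chart. That change raises $\mu$ by exactly $m$. So deferring to Auroux does not prove the statement as formulated with the paper's $\mu(u)$. You must either prove $\mathrm{ind}(D_u\bar\partial_J)=m+\mu(u)$, that is, $\dim\mathcal{M}(\vec x,y)=n-3+m+\mu(u)$, or change the definition of $\mu$ at the output corner and show that this corner alone accounts for the extra $m$.
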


The following gives an explicit description of $D_u\bar\partial_J$. Since $X$ is a K\"ahler manifold, the complex structure on $TX$ is compatible with the Levi-Civita $\nabla$ connection. Moreover, the $(0,1)$-part of $\nabla$ is the Dolbeault operator $\bar\partial_X$. Before proceeding, the following lemma is needed.

\begin{lem}\label{lem:holvec}
    Suppose $u:\D\hookrightarrow X$ is a smooth map mapping into the K\"ahler manifold $(X,\omega,J)$. After pulling back $\nabla$ to the bundle $E:=u^*T^{1,0}X$ to get a connection $\nabla_E$, then the operator $\bar\partial_E$ defined by taking the $(0,1)$-part of $\nabla_E$ defines a holomorphic structure on $E$.
\end{lem}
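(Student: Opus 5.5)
The plan is to use the fact that over a one-dimensional complex base every $(0,1)$-connection is integrable. Concretely, I will show that $\bar\partial_E$ is a well-defined operator $\Gamma(E)\to\Omega^{0,1}(\D,E)$ satisfying the Leibniz rule. Then, since $\Omega^{0,2}(\D)=0$ on a Riemann surface, the integrability condition $\bar\partial_E^2=0$ holds trivially. Finally, the Koszul--Malgrange theorem (or a direct local solution of a $\bar\partial$-equation) yields local holomorphic frames, which is what it means for $\bar\partial_E$ to define a holomorphic structure.

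\textbf{Step 1: $E$ is preserved by $\nabla_E$.} Since $X$ is K\"ahler, $\nabla J=0$. The complexified Levi-Civita connection therefore commutes with $J$ and preserves the $i$-eigenbundle $T^{1,0}X$. Pulling back along $u$, the connection $u^*\nabla$ on $u^*(TX\otimes\C)$ still commutes with $u^*J$. So for a section $\xi$ of $E$ and a vector $v$ on $\D$, we have $J(\nabla_{E,v}\xi)=\nabla_{E,v}(J\xi)=i\nabla_{E,v}\xi$. This shows $\nabla_E$ is a genuine connection on $E$.

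\textbf{Step 2: define $\bar\partial_E$ and check the Leibniz rule.} Set $\bar\partial_E\xi:=(\nabla_E\xi)^{0,1}$, the $(0,1)$-part with respect to the complex structure $j$ of $\D$. Explicitly, $\bar\partial_E\xi=\tfrac12(\nabla_{E,\partial_s}\xi+i\nabla_{E,\partial_t}\xi)\,d\bar z$. For a smooth function $f$, the rule $\nabla_E(f\xi)=df\otimes\xi+f\nabla_E\xi$ gives $\bar\partial_E(f\xi)=\bar\partial f\otimes\xi+f\bar\partial_E\xi$.

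\textbf{Step 3: integrability and local frames.} Here $\D$ has complex dimension one, so $\Omega^{0,2}(\D)=0$ and $\bar\partial_E^2=0$ automatically. To produce holomorphic frames, I will trivialize $E$ smoothly over $\D$ (the disc is contractible) by a frame $e_1,\dots,e_m$. In this frame $\bar\partial_E=\bar\partial+A$ with $A$ an $\mathfrak{gl}(m,\C)$-valued $(0,1)$-form. I then need a $\mathrm{GL}(m,\C)$-valued $g$ with $\bar\partial g+Ag=0$ near any point; the frame $e\cdot g$ is then holomorphic. This is the step that requires actual analysis. I would obtain it either by citing the Koszul--Malgrange theorem in dimension one, or by solving the linear equation with the Cauchy transform on a small disc: write $g=I+h$ and solve $h=-T(A(I+h))$, where $T$ is the Cauchy--Pompeiu operator. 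On a sufficiently small disc $T$ has small operator norm, so a contraction argument in $C^{k,\alpha}$ gives $h$ small, and hence $g$ invertible.

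The transition maps between two such frames are then holomorphic, since they are annihilated by $\bar\partial$. This gives the holomorphic structure, and its $\bar\partial$-operator is $\bar\partial_E$.
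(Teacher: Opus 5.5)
Your proposal is correct and is essentially the paper's argument. In both, $\bar\partial_E^2=0$ holds automatically because $\D$ carries no nonzero $(0,2)$-forms, and the integrability theorem for $\bar\partial$-operators then gives the holomorphic structure (you cite Koszul--Malgrange; the paper cites Donaldson--Kronheimer, Theorem 2.1.53). Your Step 1, that $\nabla J=0$ so $\nabla_E$ preserves $u^*T^{1,0}X$, fills in a point the paper leaves implicit.

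One small caution if you prove the local-frame lemma by hand: do the Cauchy-transform contraction in $C^{0,\alpha}$, or in scale-adapted norms, and then bootstrap. In unweighted $C^{k,\alpha}$ with $k\ge1$, $T$ does not have small norm: since $\partial_{\bar z}Tf=f$, taking $f\equiv 1$ shows the norm is at least $1$ for every radius.
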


\begin{proof}
    After equipping $\D$ with the standard complex structure, the complex vector bundle $E$ is a holomorphic bundle if and only if $\bar\partial_E^2=0$ by \cite[Theorem~2.1.53]{donaldson1997geometry}. Recall that there are no non-trivial $(0,2)$-forms over $\D$. As a result,
    \[
    \bar\partial_E^2=F^{(0,2)}_{\nabla_E}=0.
    \]
\end{proof}

Combining the above lemma and McDuff-Salamon \cite[Equation~3.1.6]{mcduff2025j}, we have
\begin{equation}\label{equ:varcroper}
    D_u\bar\partial_J=\nabla^{(0,1)}_E=\bar\partial_E.
\end{equation}

\section{Minimal surface method}\label{sec:2}

The existence of minimal manifolds in closed K\"ahler manifolds is proved in this section. This is a necessary step in the proof of the main theorem. We first define the admissible set and then use the variational method in Evans \cite{evans} to show the existence of a minimal disc. For technical reasons, the proof that the minimal surface is perpendicular to the Lagrangians is postponed to the final part of this section.

\begin{defn}
    Given a K\"ahler manifold $(X,\omega,J)$ and a sequence of Lagrangians $L_k$ with $k=1,\dots,n$ intersecting transversely in $X$, the \textbf{Dirichlet integral} of a smooth map $u:(\D,\partial\D)\rightarrow(X,\cup_k L_k)$ is defined as
    \[
    \mathcal{D}[u]:=\frac12\int_\D g(\partial_su,\partial_su)+g(\partial_tu,\partial_tu)dvol_\D,
    \]
    where $g(\cdot,\cdot):=\omega(\cdot,J\cdot)$ is a Riemann metric and $s+it$ is the standard complex coordinates on $\D$.
\end{defn}

Recall that for any smooth manifold $X$ and a sequence of Lagrangians $L_k$ with $k=1,\dots,n$, there is a relative homotopy long exact sequence:
\[
\cdots\rightarrow\pi_2(X)\rightarrow\pi_2(X,\cup_k L_k)\xrightarrow{\partial}\pi_1(\cup_k L_k)\xrightarrow{i_*}\pi_1(X)\cdots.
\]

\begin{defn}\label{def:admset}
    Given a symplectic manifold $(X,\omega)$ and a sequence of Lagrangians $L_k$ with $k=1,\dots,n$ intersecting transversely in $X$, the \textbf{admissible set} $\mathcal{P}_{\vec x,y}(\gamma,A)$ is the $W^{1,2}$ completion of the following set
    \[
    \{u\ is\ a\ disc\ connecting\ \vec x,y|\ [u]=A,\ [u|_{\partial \D}]=[\gamma]\}\ \ \mathrm{(Definition}\ \ref{def:holodisc}\mathrm{)},
    \]
    where $A\in\pi_2(X,\cup_kL_k)$ and $\gamma:S^1\rightarrow \cup_kL_k$ is a loop such that each connected component $I_k$ of $S^1-\{e^{\frac{2\pi k}{n}i}\}$ is mapped to $L_k$. Here $I_k$ is ordered counterclockwisely starting at $1\in S^1$. Moreover, $\partial A=[\gamma]\ne0$ and $i_*[\gamma]=0$ in the above long exact sequence.
\end{defn}

\begin{defn}
    Given a symplectic manifold $(X,\omega)$ and a sequence of Lagrangians $L_k$ with $k=1,\dots,n$ intersecting transversely in $X$, a disc $u$ connecting $\vec x,y\in L_1\cap L_2$ is called a \textbf{minimal surface} if
    \[
    \mathcal{D}[u]=\inf_{v\in \mathcal{P}_{\vec x,y}(\gamma,A)}\mathcal{D}[v].
    \]
\end{defn}

The next goal of this section is to study the existence of minimizers.
The approach we use follows the proof of the existence of minimizers in Section $8.2$ of \cite{evans}.
We first recall some details from \cite{evans}.
In Section $8.2$ of \cite{evans}, Evans defines the coercivity and
convexity of the function $L$, which ensure that the functional
$$
I[w]:=\int_{U}L(Dw(x), w(x), x)dx
$$
has a minimizer.
The coercivity condition is  
\begin{equation}
  \left\{
    \begin{array}{ll}
  {\rm there \,\,\,exist\,\,\, constants\,\,\, \alpha>0,\,\,\,\beta\geq 0\,\,\,such\,\,\,that} & \\
     &   \\
    L(p,z, x )\geq \alpha|p|^q-\beta &   \\
       &   \\
   {\rm for\,\,\, all }\,\,\,p=(p_1\ldots,p_n)\in\R^m,\,\,\,z\in\R,\,\,\,x\in U.
   \end{array}
  \right.
  \end{equation}
  Therefore,
  $$
  I[w]\geq\alpha\|Dw\|^q_{L^q(U)}-\gamma
  $$
  for $\gamma:=\beta\int_{U}dx$.
  The coercivity condition ensures that the minimizing sequence
 is bounded in $W^{1,q}(U)$, and hence admits a weakly convergent subsequence.
 Then, combined with the convexity condition of integrand $L$,
 $$
 \sum_{i,j=1}^n\frac{\partial^2L}{\partial p_i\partial p_j}(Dw(x), w(x), x)\xi_i\xi_j\geq 0\,\,\,(\xi\in\R^m,\,\,\,x\in U),
 $$
  this leads to
the weak lower semicontinuity
$$
I[w]\leq \liminf_{k\rightarrow\infty}I[w_k]
$$
 whenever 
 $$
 w_k\rightharpoonup w\,\,\, {\rm weakly\,\,\,in}\,\,\, W^{1,q}(U).
 $$
Finally, Evans proves the existence of minimizer.
\begin{them}
[\cite{evans} Theorem $2$, Section $8.2$]
Assume that $L$ satisfies the coercivity inequality and is convex in the variable $p$.
Suppose also the admissible set $\mathcal{A}$ is nonempty.
Then there exists at least one function $\tilde{w}\in\mathcal{A}$ solving
$$
I[\tilde{w}]=\min_{w\in\mathcal{A}}I[w].
$$
\end{them}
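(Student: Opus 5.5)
This is the classical direct method of the calculus of variations, and I would prove it in three steps. Coercivity gives a bounded minimizing sequence. Reflexivity gives a weakly convergent subsequence. Convexity in $p$ gives weak lower semicontinuity, so the weak limit is a minimizer.

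\emph{Step 1: bounded minimizing sequence.} Set $m:=\inf_{w\in\mathcal{A}}I[w]$. Since $\mathcal{A}\neq\emptyset$ and $I[w]\ge -\gamma$ by coercivity, $m$ is finite. Choose $w_k\in\mathcal{A}$ with $I[w_k]\to m$. Coercivity gives $\alpha\|Dw_k\|_{L^q(U)}^q\le I[w_k]+\gamma$, so $\{Dw_k\}$ is bounded in $L^q$. To control $\|w_k\|_{L^q}$, I use the structure of $\mathcal{A}$: in Evans's setting $\mathcal{A}$ consists of $W^{1,q}$ functions with prescribed boundary trace $g$. Then $w_k-\tilde g\in W^{1,q}_0(U)$ for a fixed extension $\tilde g$ of $g$, and Poincaré's inequality bounds $\|w_k\|_{W^{1,q}}$. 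Assuming $1<q<\infty$, reflexivity lets me extract a subsequence with $w_k\rightharpoonup \tilde w$ weakly in $W^{1,q}(U)$. Since $W^{1,q}_0$ is a closed convex set, it is weakly closed, so $\tilde w-\tilde g\in W^{1,q}_0$ and hence $\tilde w\in\mathcal{A}$.

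\emph{Step 2: weak lower semicontinuity.} This is the main obstacle, and I would follow Evans's argument.

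First, Rellich compactness gives $w_k\to\tilde w$ strongly in $L^q$. Passing to a further subsequence, $w_k\to\tilde w$ a.e.

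Next, by Egorov's theorem, for each $\epsilon>0$ there is a set $E_\epsilon$ with $|U\setminus E_\epsilon|<\epsilon$ on which $w_k\to\tilde w$ uniformly. I would also intersect with $F_\epsilon=\{|\tilde w|+|D\tilde w|\le 1/\epsilon\}$ and set $G_\epsilon=E_\epsilon\cap F_\epsilon$.

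To have a nonnegative integrand outside $G_\epsilon$, I would first replace $L$ by $L+\beta\ge 0$; this changes $I$ by a constant and does not affect minimizers. Then, dropping the integral over $U\setminus G_\epsilon$ and using convexity of $L$ in $p$,
\[
L(Dw_k,w_k,x)\ge L(D\tilde w,w_k,x)+D_pL(D\tilde w,w_k,x)\cdot(Dw_k-D\tilde w),
\]
I obtain
\[
I[w_k]\ge \int_{G_\epsilon}L(D\tilde w,w_k,x)\,dx+\int_{G_\epsilon}D_pL(D\tilde w,w_k,x)\cdot(Dw_k-D\tilde w)\,dx.
\]

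On $G_\epsilon$, uniform convergence and the bound $|D\tilde w|\le 1/\epsilon$ give:
\begin{itemize}
\item the first integral converges to $\int_{G_\epsilon}L(D\tilde w,\tilde w,x)\,dx$;
\item $D_pL(D\tilde w,w_k,x)\to D_pL(D\tilde w,\tilde w,x)$ uniformly and boundedly, while $Dw_k-D\tilde w\rightharpoonup 0$ weakly in $L^q$, so the second integral tends to $0$.
\end{itemize}
Here I assume $L$ is smooth, or at least continuous with continuous $D_pL$, so these bounded-set estimates apply.

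Hence $\liminf_k I[w_k]\ge\int_{G_\epsilon}L(D\tilde w,\tilde w,x)\,dx$. Letting $\epsilon\to0$ and applying monotone convergence, which is valid since the shifted $L$ is nonnegative, gives $I[\tilde w]\le\liminf_k I[w_k]$.

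\emph{Step 3: conclusion.} Combining the steps, $m\le I[\tilde w]\le\liminf_k I[w_k]=m$ with $\tilde w\in\mathcal{A}$. Therefore $\tilde w$ attains the minimum.
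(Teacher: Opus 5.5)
Your proposal is correct and is the same direct-method argument the paper sketches by citing Evans: coercivity (with Poincaré, using the fixed boundary trace) gives a $W^{1,q}$-bounded minimizing sequence; reflexivity and weak closedness of the trace condition give a weak limit in $\mathcal{A}$; and convexity in $p$ (via Rellich, Egorov and the supporting-hyperplane inequality) gives weak lower semicontinuity. The only slip is asserting that $m$ is finite: nonemptiness of $\mathcal{A}$ and $I\ge-\gamma$ give only $m>-\infty$, and the case $m=+\infty$ (possible since $L$ has no upper growth bound) must be handled separately, as Evans does, though it is trivial because every $w\in\mathcal{A}$ is then a minimizer.
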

For more details, we refer to Section $8.2$ in \cite{evans}. In our context, the ambient manifold is compact, and the integrand is $g(\partial_su,\partial_su)+g(\partial_tu,\partial_tu)$, which naturally satisfy the coercivity and convexity conditions for $q=2$ specified there. Thus together with standard regularity theory, we can obtain the following result.
\begin{them}\label{thm:minisurf}
    Given a K\"ahler manifold $(X,\omega,J)$ and a sequence of Lagrangians $L_k$ with $k=1,\dots,n$ intersecting transversely in $X$, then there exists a smooth minimal surface in the admissible set $\mathcal{P}_{\vec x,y}(\gamma,A)$, where $\vec x,y$ are defined as in Definition \ref{def:holodisc}.
\end{them}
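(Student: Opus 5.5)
We prove Theorem \ref{thm:minisurf} by the direct method of the calculus of variations, following Section 8.2 of \cite{evans}, and then upgrade the minimizer to a smooth map by regularity theory.

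\textbf{Setup and compactness.} Embed $X$ isometrically in some $\R^N$ by Nash, so that $\mathcal{P}_{\vec x,y}(\gamma,A)$ becomes a subset of $W^{1,2}(\D,\R^N)$ cut out by three kinds of condition:
\begin{itemize}
\item pointwise constraints $u(\D)\subset X$ and $u(I_k)\subset L_k$ in the trace sense;
\item point conditions $u(e^{2\pi ik/n})=x_k$ and $u(1)=y$;
\item homotopy data $[u]=A$ and $[u|_{\partial\D}]=[\gamma]$.
\end{itemize}
The set is nonempty because $\partial A=[\gamma]$ and $i_*[\gamma]=0$ provide a smooth representative. Take a minimizing sequence $u_j$. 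Since $X$ is compact, $|u_j|$ is uniformly bounded, and $\mathcal{D}[u_j]$ is bounded. Hence $u_j$ is bounded in $W^{1,2}$, and after passing to a subsequence $u_j\rightharpoonup u$ weakly in $W^{1,2}$, strongly in $L^2$, and with traces converging strongly in $L^2(\partial\D)$. The integrand is a convex quadratic form in $du$, so $\mathcal{D}$ is weakly lower semicontinuous (the convexity and coercivity criterion quoted above with $q=2$). Therefore $\mathcal{D}[u]\le\liminf\mathcal{D}[u_j]$. Strong $L^2$ convergence (a.e. after a further subsequence) together with closedness of $X$ and of the $L_k$ gives $u(\D)\subset X$ and $u(I_k)\subset L_k$.

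\textbf{Main obstacle: point and homotopy constraints.} In two dimensions $W^{1,2}$ does not embed in $C^0$, so these constraints need not survive weak limits. Energy can concentrate: a sphere may bubble off in the interior, or a disc or strip may escape at a boundary point or at a corner $e^{2\pi ik/n}$. We handle this with the three-point normalization and a Courant–Lebesgue argument.
\begin{itemize}
\item \emph{Three-point normalization.} For $n\ge3$, the fixed roots of unity already break the conformal symmetry. For $n\le 2$ we add the normalization by hand.
\item \emph{Equicontinuity on the boundary.} The Courant–Lebesgue lemma gives, for every small $\delta$, a radius $r\in(\delta,\sqrt\delta)$ such that $u_j$ oscillates by at most $C/\sqrt{\log(1/\delta)}$ on $\partial B_r(p)\cap\D$. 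Combined with transversality of the $L_k$ at the corners, this pins the small arcs near each marked point close to $x_k$ or $y$. This yields equicontinuity of $u_j|_{\partial\D}$, hence uniform convergence of the boundary traces. So the point conditions and the class $[u|_{\partial\D}]=[\gamma]$ pass to the limit.
\item \emph{Interior bubbling.} A sphere bubble in the interior could change the class $A$. To exclude this, we would fix the relative class only modulo $\pi_2(X)$, or note that replacing $u_j$ by an energy-decreasing modification (Sacks–Uhlenbeck style) keeps the class. This is the step I would state most carefully, possibly by interpreting $\mathcal{P}_{\vec x,y}(\gamma,A)$ as the closure within which $\mathcal{D}$ is minimized, in line with Definition \ref{def:admset}, whose admissible set is defined as a $W^{1,2}$ completion.
\end{itemize}

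\textbf{Regularity.} The minimizer $u$ is weakly harmonic into $X$ with free boundary on $L_k$ along each $I_k$. Moreover, $u$ is energy minimizing among maps with the same data, so Morrey's lemma for minimizers in dimension two gives continuity in the interior. Away from the corners, the free-boundary regularity of Hildebrandt–Nitsche and Ye \cite{ye1991existence} gives $C^{\alpha}$ up to $I_k$. Bootstrapping the harmonic map equation then yields smoothness on $\overline{\D}$ minus the $n$ marked points, together with continuity at those points. Finally, variations by interior reparametrizations show the Hopf differential is holomorphic and vanishes via the boundary condition, so $u$ is conformal, i.e. a minimal surface; the orthogonality to the Lagrangians is deferred to the end of this section as announced.
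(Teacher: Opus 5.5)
\textbf{Same route as the paper, with a genuine gap that the proposal does not close.} The paper's entire proof is an appeal to Evans's direct-method theorem (coercivity and convexity with $q=2$) plus ``standard regularity theory''. That theorem concerns an admissible set with fixed Dirichlet data, which is weakly closed in $W^{1,2}$. You rightly observe that $\mathcal{P}_{\vec x,y}(\gamma,A)$ is not weakly closed, but your proposal does not repair this, and the step you offer for it fails.

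\emph{The Courant--Lebesgue step.} The lemma controls only the image of the arc $\partial B_r(p)\cap\D$. In Plateau's problem, the monotone Jordan-curve boundary condition then controls the enclosed boundary segment. Here, however, $u_j(B_r(p)\cap\partial\D)$ is an arbitrary path in $L_k$ (in $L_k\cup L_{k+1}$ at a corner) between two nearby points. So a disc can bubble off at a boundary point. At a corner, transversality only places the small arc near \emph{some} point of $L_k\cap L_{k+1}$, so the corner can slide from $x_k$ to another intersection point through a strip.

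\emph{The remedies for interior bubbles.} Working modulo $\pi_2(X)$ changes the class. Harmonic replacement acts only below the $\varepsilon$-energy threshold and cannot prevent concentration at points. Reading $\mathcal{P}_{\vec x,y}(\gamma,A)$ as a norm completion does not make it contain weak limits.

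\emph{The statement needs extra hypotheses.} In the flat torus $\C/(\mathbb{Z}+i\mathbb{Z})$, let $L_1$ be the image of $\R$ and $L_2$ the graph of a small $1$-periodic function with simple zeros $p<q<r<s$ and signs $+,-,+,-$ on the intervals between them. Take $n=2$, $y=p$, $x_1=r$, and the bigon class whose lift is bounded by $[p,r]$ and the graph over $[p,r]$. After lifting to $\C$, every disc in this class has degree $+1$ over the lobe $U$ above $(p,q)$ and $-1$ over the lobe $V$ below $(q,r)$. Hence $\mathcal{D}\ge\mathcal{A}\ge|U|+|V|$. Gluing a holomorphic strip onto $U$ and an antiholomorphic strip onto $V$ along a long neck at $q$ shows that this bound is the infimum. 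Yet a map attaining it would be harmonic and conformal, hence $\pm$holomorphic, which the opposite degrees forbid. These glued maps form a minimizing sequence that pinches at $q$, so after normalization a corner slides to $q$ --- precisely what your equicontinuity claim would exclude. What is missing is a hypothesis ruling out energy concentration. One example is a strict inequality between $\inf\mathcal{D}$ on $\mathcal{P}_{\vec x,y}(\gamma,A)$ and the total energy of every splitting of the data into a disc plus nonconstant sphere, disc or strip bubbles, in the spirit of Sacks--Uhlenbeck and, for free boundaries, Fraser.

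\textbf{Separately, your conformality step is wrong for $n\ge 4$.} Domain variations must fix the $n$ marked points. So $z^2\phi$ is real on the open arcs but, being only $L^1$, may have simple poles at the marked points. Such differentials form an $(n-3)$-dimensional space: the conformal modulus of the marked disc, which you froze by placing the corners at roots of unity. For example, in $\C/(\mathbb{Z}+i\mathbb{Z})$ take $L_1,\dots,L_4$ to be the closed geodesics through the sides of $[0,a]\times[0,b]$ with $0<a,b<1$ and $a\ne b$. The minimizer with corners at $1,i,-1,-i$ is $\ell\circ\Psi$, where $\Psi$ maps $\D$ conformally onto the unit square and $\ell(x,y)=(ax,by)$. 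It is not conformal. It is also not smooth at the corners, so your claim of mere continuity there is the correct one. Since the paper defines a ``minimal surface'' only as a minimizer of $\mathcal{D}$, you can drop conformality from this statement. However, the conformality later invoked for Theorem \ref{thm:areamini} would require minimizing over the positions of the marked points as well, which brings in a further degeneration (colliding marked points).
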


The rest of this section is to show that the disc we find in the above theorem is perpendicular to the given Lagrangians. This is a technical part in the proof of the main theorem. To achieve this, we need to show that the disc we find in the above theorem is in fact area minimizing.

\begin{defn}
    Given a K\"ahler manifold $(X,\omega,J)$ and a sequence of Lagrangians $L_k$ with $k=1,\dots,n$ intersecting transversely in $X$, the \textbf{area functional} of a smooth map $u:(\D,\partial\D)\rightarrow(X,\cup_kL_k)$ is defined as
    \[
    \mathcal{A}[u]:=\int_\D \sqrt{\det(Du^TDu)}\,dvol_\D,
    \]
    where $Du$ is the Jacobian matrix.
\end{defn}

When the minimizer of the Dirichlet integral is conformal, this element also minimizes the area functional. The references include Robertson \cite{robertson2020global} and Schmidt \cite{schmidtlecture}.
\begin{lem}[Robertson \cite{robertson2020global}]
    Let $(X,\omega,J)$ be a K\"ahler manifold and $g(\cdot,\cdot)=\omega(\cdot,J\cdot)$ is a Riemann metric. Then for any $u(s,t)$ in the admissible set $\mathcal{P}_{\vec x,y}(\gamma,A)$ $($Definition \ref{def:admset}$)$, we have
    \[
    \mathcal{A}[u]\le\mathcal{D}[u],
    \]
    where the equality holds if and only if $u$ is conformal.
\end{lem}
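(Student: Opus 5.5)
The inequality is pointwise, so the plan is to establish it at each point of $\D$ and then integrate; the membership $u\in\mathcal{P}_{\vec x,y}(\gamma,A)$ only guarantees that $u\in W^{1,2}$, so that both integrals are finite and the Jacobian $Du$ exists almost everywhere. At a point $z=s+it\in\D$ where $Du$ exists, write $a=\partial_su$ and $b=\partial_tu$ in $T_{u(z)}X$, and put
\[
E=g(a,a),\quad F=g(a,b),\quad G=g(b,b).
\]
Then $Du^TDu$ is the Gram matrix $\begin{bmatrix}E&F\\F&G\end{bmatrix}$ with respect to $g$, so the two integrands are $\sqrt{EG-F^2}$ and $\tfrac12(E+G)$.

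The pointwise estimate follows from AM--GM and then Cauchy--Schwarz:
\[
\sqrt{EG-F^2}\le\sqrt{EG}\le\tfrac12(E+G).
\]
Integrating over $\D$ gives $\mathcal{A}[u]\le\mathcal{D}[u]$.

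For the equality case, first take $u$ conformal, meaning $E=G$ and $F=0$ almost everywhere. Then both integrands equal $E$, and the integrals coincide.

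For the converse, assume $\mathcal{A}[u]=\mathcal{D}[u]$. The integrand difference $\tfrac12(E+G)-\sqrt{EG-F^2}$ is nonnegative and has zero integral, so it vanishes almost everywhere. At any such point both inequalities in the chain are equalities:
\begin{itemize}
\item equality in the first step gives $F^2=0$, since $EG-F^2=EG$;
\item equality in the second step gives $(\sqrt E-\sqrt G)^2=0$, so $E=G$.
\end{itemize}
Hence $E=G$ and $F=0$ almost everywhere, which is exactly conformality (weak conformality in the Sobolev setting, including points where $Du=0$). For smooth $u$ this holds everywhere by continuity.

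The only delicate point is what "conformal" means for a $W^{1,2}$ map. It has to be read as the almost-everywhere relations $|u_s|=|u_t|$ and $g(u_s,u_t)=0$; with that reading the argument above is complete.
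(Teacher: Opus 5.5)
Your proof is correct. It is the standard pointwise argument: $\sqrt{EG-F^2}\le\sqrt{EG}\le\tfrac12(E+G)$, integrated over $\D$, with equality forcing $F=0$ and $E=G$ almost everywhere, i.e.\ weak conformality. This is the argument behind the paper's citation of Robertson and Schmidt; the paper gives no proof of its own. One small labelling slip: the first inequality is just $F^2\ge 0$, the second is AM--GM, and Cauchy--Schwarz is only needed to ensure $EG-F^2\ge 0$ so that the square root is defined.
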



The minimizer of the Dirichlet integral defined at the beginning of this section is conformal, therefore the following is true.
\begin{them}\label{thm:areamini}
    Let $u_0$ be the minimal surface found in Theorem \ref{thm:minisurf}. Then $u$ minimizes the area functional as well since $u$ is conformal, i.e.
    \[
    \mathcal{A}[u_0]=\inf_{v\in \mathcal{P}_{\vec x,y}(\gamma,A)}\mathcal{A}[u].
    \]
\end{them}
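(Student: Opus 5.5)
The plan is to prove two things in order. First, the Dirichlet minimizer $u_0$ from Theorem \ref{thm:minisurf} is weakly conformal. Second, by the Robertson lemma, conformality turns Dirichlet minimality into area minimality.

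For conformality, I would use inner (domain) variations. Take a smooth vector field $\xi$ on $\overline{\D}$ that is tangent to $\partial\D$ and vanishes at the $n$ unit roots $e^{\frac{2\pi k}{n}i}$, and let $\phi_\epsilon$ be its flow. Then $u_0\circ\phi_\epsilon$ still maps each arc $I_k$ into $L_k$, sends the marked points to $x_k$ and $y$, and stays in the classes $A$ and $[\gamma]$. So it remains in $\mathcal{P}_{\vec x,y}(\gamma,A)$, and $\frac{d}{d\epsilon}\big|_{\epsilon=0}\mathcal{D}[u_0\circ\phi_\epsilon]=0$.

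The standard computation writes this derivative as $\mathrm{Re}\int_\D \Phi\,\bar\partial\xi$, where $\Phi:=g(\partial_z u_0,\partial_z u_0)\,dz^2$ is the Hopf differential. Two facts follow:
\begin{itemize}
\item Interior variations give $\bar\partial \Phi=0$, so $\Phi$ is holomorphic in $\D$.
\item Boundary-tangent variations give $\mathrm{Im}(z^2\Phi(z))=0$ on each open arc $I_k$.
\end{itemize}
By the reflection principle, $z^2\Phi$ extends holomorphically across each $I_k$ to a function that is real on $\partial\D$ away from the $n$ marked points. Finite energy gives $\Phi\in L^1$, so each singularity at a marked point is at worst a simple pole.

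The main obstacle is ruling out these residual poles. The corner points are exactly where constraints are imposed, and a Dirichlet minimizer need not be conformal there a priori. I would first try local analysis near $x_k$: in the Darboux chart the two Lagrangians are $\R^m$ and $i\R^m$ meeting transversely. Near the corner, $u_0$ solves a harmonic map problem with mixed Lagrangian boundary conditions, and I expect it to admit an expansion $u_0(z)\approx \mathrm{Re}(c\,(z-w_k)^{1/2+\ell})$-type with $\ell\ge 0$. That would give $|\partial_z u_0|^2=O(|z-w_k|^{-1+\delta})$, so $(z-w_k)\Phi\to 0$ and the singularity is removable.

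Once the poles are removed, $z^2\Phi$ extends to a holomorphic function on $\C\mathrm{P}^1$ via $z\mapsto 1/\bar z$. It is therefore constant and real, which forces $\Phi\equiv 0$ because $\Phi$ is regular at $z=0$. Hence $|\partial_s u_0|=|\partial_t u_0|$ and $g(\partial_s u_0,\partial_t u_0)=0$, so $u_0$ is weakly conformal.

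Finally, for any $v\in\mathcal{P}_{\vec x,y}(\gamma,A)$ and any $\epsilon>0$, I would use Morrey's $\epsilon$-conformality lemma to reparametrize $v$ by a diffeomorphism $\psi$ of $\D$ fixing the marked points, so that $\mathcal{D}[v\circ\psi]\le\mathcal{A}[v]+\epsilon$. Fixing three marked points is automatic after a Möbius normalization; for more points, one can compose with a boundary diffeomorphism and accept a small energy error. Since $v\circ\psi$ lies in the admissible set, we get
\[
\mathcal{A}[u_0]=\mathcal{D}[u_0]\le\mathcal{D}[v\circ\psi]\le\mathcal{A}[v]+\epsilon.
\]
Letting $\epsilon\to 0$ proves $\mathcal{A}[u_0]=\inf_{v\in\mathcal{P}_{\vec x,y}(\gamma,A)}\mathcal{A}[v]$.
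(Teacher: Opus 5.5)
Your two-step plan (first show $u_0$ is conformal, then reparametrize competitors to be $\epsilon$-conformal) is the natural way to flesh out the paper's argument. The paper only asserts that the Dirichlet minimizer is conformal and cites Robertson's inequality. Your Morrey step is genuinely needed there, since $\mathcal{A}[v]\le\mathcal{D}[v]$ alone gives no comparison between $\mathcal{A}[u_0]$ and $\mathcal{A}[v]$.

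However, the step you identify as the main obstacle fails, and for $n\ge4$ it cannot be repaired, because $u_0$ need not be conformal. In your own local model the $\ell=0$ term already gives $|\partial_z u_0|^2\sim|z-w_k|^{-1}$, not $O(|z-w_k|^{-1+\delta})$. In the flat model at an $\R^m/i\R^m$ corner, in half-plane coordinates, the leading terms of the components are $\mathrm{Re}(c_jz^{1/2})$ and $\mathrm{Im}(d_jz^{1/2})$ with $c_j,d_j$ real. Hence $\Phi\approx\frac{1}{16}\sum_j(c_j^2-d_j^2)\,z^{-1}$, whose residue is generically nonzero.

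This is structural. Up to a constant, the residue at $w_k$ is the first variation of $\mathcal{D}$ under the inner variation that slides $w_k$ along $\partial\D$. That is exactly the variation you excluded by pinning the corners at the roots of unity. Globally, the reflected $z^2\Phi$ ranges over $\{z^2Q(z)/\prod_k(z-w_k):\ \deg Q\le n-4\}$ subject to reality on $\partial\D$. This space has real dimension $n-3$: it is the cross-ratio moduli of the marked disc.

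Here is a concrete counterexample with $n=4$. Take the flat torus $\C/(2a\mathbb{Z}\oplus 2i\mathbb{Z})$ with $a\neq1$, the Lagrangian circles $\{x=a\}$, $\{y=1\}$, $\{x=0\}$, $\{y=0\}$, and the class of the rectangle $[0,a]\times[0,1]$. Since $(\D;1,i,-1,-i)$ is conformally the unit square with its vertices, the Dirichlet minimizer is, in square coordinates, $(x,y)\mapsto(ax,y)$. This map is not conformal.

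The same example breaks your Morrey step. Lift an admissible $v$ to $\C$ and write it on the square. Each horizontal segment is mapped to a path joining the lines $x=0$ and $x=a$, and each vertical one to a path joining $y=0$ and $y=1$. Cauchy--Schwarz then gives $\mathcal{D}[v]\ge\frac{1+a^2}{2}$ for every admissible $v$, while $\inf\mathcal{A}=a$. So no $\psi$ fixing the four marked points can give $\mathcal{D}[v\circ\psi]\le\mathcal{A}[v]+\epsilon$ once $\epsilon<\frac{(1-a)^2}{2}$. Cross-ratios are conformal invariants, and correcting them costs energy bounded below, not a small error.

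For $n\le3$ your scheme does work, and it needs no corner analysis. The reflected $z^2\Phi$ is a rational function with at most $n\le3$ simple poles but with double zeros at $0$ and $\infty$, so it vanishes identically. A M\"obius normalization then handles Morrey's lemma.

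For $n\ge4$ the missing ingredient is to let the marked points vary as well: minimize also over their positions modulo $\mathrm{Aut}(\D)$, as in Douglas's approach to the Plateau problem. This supplies the variations that kill the residues. The price is that you must rule out collisions of marked points, and you end with a conformal disc whose corners are no longer at the roots of unity.
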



%

As a corollary of Theorem \ref{thm:areamini}, the minimizer $u_0$ of the area functional $\mathcal{A}$ satisfies the perpendicular condition, that is, the outer normal vector of $u_0$ along the boundary is perpendicular to the tangent space of $L_k$ for $k=1,\ldots,n$. This is a technical point needed for the next section, as stated below.

\begin{cor}\label{cor:perpbd}
    Given a K\"ahler manifold $(X,\omega,J)$ and a sequence of Lagrangians $L_k$ with $k=1,\dots,n$ intersecting transversely in $X$, suppose $u_0$ is the minimizer of the area functional $\mathcal{A}$ among the admissible set $\mathcal{P}_{\vec x,y}(\gamma,A)$ $($Definition \ref{def:admset}$)$. Then the outer normal vector of $u_0$ along the boundary is perpendicular to the tangent space of $L_k$ when the image of $u$ from the boundary is in $L_k$ for $k=1,\ldots,n$.
\end{cor}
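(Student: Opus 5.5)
The plan is to compute the first variation of the area functional $\mathcal{A}$ at $u_0$ along admissible variations, and read off the perpendicularity condition from the boundary term. By Theorem \ref{thm:minisurf}, $u_0$ is smooth. By Theorem \ref{thm:areamini}, $u_0$ is conformal and minimizes $\mathcal{A}$ in $\mathcal{P}_{\vec x,y}(\gamma,A)$.

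First I will fix an interior point $z_0$ of some arc $I_k$ and a vector field $V$ along $u_0$ with compact support near $z_0$, away from the unit roots, such that $V(z)\in T_{u_0(z)}L_k$ for $z\in I_k$. Such a $V$ generates a variation $u_\epsilon(z)=\exp_{u_0(z)}(\epsilon V(z))$. To keep the boundary on $L_k$ exactly, I will use a metric for which $L_k$ is totally geodesic near $u_0(z_0)$, or alternatively compose with the nearest-point projection onto $L_k$ in a tubular neighborhood; this only alters the variation to second order. For small $\epsilon$, $u_\epsilon$ still sends the unit roots to $x_k$ and $y$, sends $I_j$ into $L_j$, and stays homotopic to $u_0$. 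Hence $u_\epsilon\in\mathcal{P}_{\vec x,y}(\gamma,A)$ and $\mathcal{A}[u_\epsilon]\ge \mathcal{A}[u_0]$.

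Second, I will compute $\frac{d}{d\epsilon}\mathcal{A}[u_\epsilon]\big|_{\epsilon=0}$. Because $u_0$ is conformal, this derivative equals that of $\mathcal{D}$ at first order, namely
\[
\int_\D g(\nabla V,du_0)\,dvol_\D = -\int_\D g(V,\tau(u_0))\,dvol_\D+\int_{\partial\D} g(V,\partial_r u_0)\,d\theta,
\]
where $\tau(u_0)$ is the tension field and $\partial_r$ is the outer normal. Interior variations with compact support in $\D$ give $\tau(u_0)=0$. Variations of the above type then give $\int_{I_k} g(V,\partial_r u_0)\,d\theta=0$ for every $V$ tangent to $L_k$. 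Taking $\pm V$ shows the derivative must vanish, not merely be nonnegative. Since $V|_{I_k}$ can be any smooth section of $u_0^*TL_k$ supported near $z_0$, it follows that $\partial_r u_0(z_0)\perp T_{u_0(z_0)}L_k$. As $z_0$ was arbitrary in the interior of $I_k$, this holds along the whole arc, for each $k$.

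The main obstacle is the first step: building genuinely admissible variations that keep the boundary on $L_k$, together with the regularity of $u_0$ up to the boundary arcs, which is needed to justify integrating by parts. I would handle the regularity by citing standard boundary regularity for the free-boundary (partially free) problem away from the corners. The corner points themselves play no role, because the variations are supported away from them.
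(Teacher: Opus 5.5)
Your proposal is correct and follows the same first-variation argument as the paper: vary $u_0$ by fields tangent to $L_k$ on the boundary arcs, use minimality to make the boundary term vanish, and conclude that the outer conormal is perpendicular to $TL_k$. The difference is cosmetic: you compute the boundary term through the Dirichlet energy, which is legitimate because $u_0$ is conformal and $\mathcal{A}\le\mathcal{D}$ with equality at $u_0$, whereas the paper quotes White's first-variation formula for area; you also spell out the admissible variations supported away from the corners and the vanishing of the interior term, which the paper leaves implicit.
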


\begin{proof}
    According to Theorem 1 of White \cite{white2013lectures}, the first variation of the area functional is
    \[
    \frac{d}{dt}|_{t=0}\mathcal{A}(u_t)=\int_{\partial\D}V\cdot\nu_{\partial\D}-\int_\D H\cdot Vdvol_\D,
    \]
    where $u_t$ is a path in the admissible set $\mathcal{P}_{\vec x,y}(\gamma,A)$ starting at $u_0$, $V=\frac{du}{dt}$, $\nu_{\partial\D}$ is the outer normal vector of $\partial\D\subset\D$, and $H$ is the mean curvature. By the definition of $V$, $V|_{\partial\D}$ is tangent to the Lagrangian where $\partial\D$ is moving on. Because $u_1$ is the minimizer of $\mathcal{A}$, $\int_{\partial\D}V\cdot\nu_{\partial\D}=0$ for all such $V$. The corollary follows directly.
\end{proof}

\section{Partial Maslov indices for union of Lagrangians}\label{sec:partms}

The partial Maslov indices for loops in a single Lagrangian submanifold were defined by Oh \cite{oh1995riemann} using a result from Globevnik \cite{globevnik1994perturbation}. Evans-Lekili \cite{evans2015floer} later showed that the partial Maslov indices provide information on whether the linearization of the Cauchy-Riemann operator is surjective (i.e. $J$ is regular) or not. In this section, we first introduce the theorem in \cite{globevnik1994perturbation}, then generalize the definition of partial Maslov indices to the case where the loop lies in the union of several Lagrangian submanifolds.

\begin{prop}[Birkhoff factorization {\cite[Lemma~5.1]{globevnik1994perturbation}}]
    Let $A:\partial\D\rightarrow\mathrm{GL}(m,\C)$ be a map of class $\mathcal{C}^\alpha$, $0<\alpha<1$. Then there is a map $\Theta:\D\rightarrow\mathrm{GL}(m,\C)$ of class $\mathcal{C}^\alpha$ on $\partial\D$, holomorphic on $\D$ such that
    \[
    A(\zeta)\overline{A(\zeta)^{-1}}=\Theta(\zeta)\Lambda(\zeta)\overline{\Theta(\zeta)^{-1}}\ \ (\zeta\in\partial\D),
    \]
    where
    \[
    \Lambda(\zeta)=\begin{bmatrix}
        \zeta^{\kappa_1}&0&\ldots&0\\
        0&\zeta^{\kappa_2}&\ldots&0\\
        ..&..&..&..\\
        0&0&\ldots&\zeta^{\kappa_m}
    \end{bmatrix},
    \]
    with unique $\kappa_1\ge\kappa_2\ge\ldots\ge\kappa_m$.
\end{prop}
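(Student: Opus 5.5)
The idea is to reduce the statement to Grothendieck's splitting theorem on $\C\mathrm{P}^1$, made compatible with a real structure. Set $B(\zeta):=A(\zeta)\overline{A(\zeta)^{-1}}$ for $\zeta\in\partial\D$. By construction $B$ is $\mathcal{C}^\alpha$, takes values in $\mathrm{GL}(m,\C)$, and satisfies the reality condition
\[
B(\zeta)\overline{B(\zeta)}=I .
\]
I will use $B$ as a clutching function. Let $\D_\infty:=\{|\zeta|\ge1\}\cup\{\infty\}$. Glue the trivial bundle $\D\times\C^m$ to $\D_\infty\times\C^m$ along $\partial\D$, identifying $(\zeta,v)$ in the outer chart with $(\zeta,B(\zeta)v)$ in the inner chart. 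This gives a rank $m$ bundle $\mathcal{E}\to\C\mathrm{P}^1$.

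First I will show that $\mathcal{E}$ is holomorphic even though $B$ is only Hölder. Local holomorphic sections are pairs of holomorphic maps $f_+$ on $\D$ and $f_-$ on $\D_\infty$, with $\mathcal{C}^\alpha$ boundary values satisfying $f_+=Bf_-$ on $\partial\D$. To see that such sections form a locally free sheaf of rank $m$, I will solve the local Riemann--Hilbert problem with Cauchy integrals. The Plemelj--Privalov theorem keeps everything in the class $\mathcal{C}^\alpha$, and this is exactly where the hypothesis $0<\alpha<1$ is used. By Grothendieck's theorem, $\mathcal{E}\cong\bigoplus_i\mathcal{O}(\kappa_i)$, and the integers $\kappa_1\ge\dots\ge\kappa_m$ are uniquely determined by $\mathcal{E}$. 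Choosing holomorphic frames adapted to this splitting on the two charts gives a factorization
\[
B=\Theta_+\Lambda\Theta_-^{-1},
\]
with $\Theta_+$ holomorphic and invertible on $\D$ and $\Theta_-$ holomorphic and invertible on $\D_\infty$. Uniqueness of the $\kappa_i$ follows from uniqueness of the splitting type. Concretely, $\dim H^0(\mathcal{E}\otimes\mathcal{O}(-d))$ can be read off from any such factorization, and these dimensions determine the $\kappa_i$.

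Second, I will exploit the reality condition to arrange $\Theta_-(\zeta)=\overline{\Theta_+(1/\bar\zeta)}$. Define
\[
\sigma:(\zeta,v)\mapsto(1/\bar\zeta,\bar v),
\]
which swaps the two charts. On $\partial\D$ we have $1/\bar\zeta=\zeta$, and the identity $B\bar B=I$ says precisely that $\sigma$ is compatible with the gluing. Hence $\sigma$ descends to an antiholomorphic involution of $\mathcal{E}$ covering $\zeta\mapsto1/\bar\zeta$, so $\mathcal{E}$ is a real bundle. The Harder--Narasimhan filtration $\mathcal{E}_{\ge d}$, generated by subsheaves $\mathcal{O}(\kappa_i)$ with $\kappa_i\ge d$, is canonical, so it is $\sigma$-invariant.

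The step I expect to be the main obstacle is building a $\sigma$-invariant adapted frame. I will do this by induction down the filtration, treating each distinct value $d$ among the $\kappa_i$ in turn.
\begin{itemize}
\item The graded piece $\mathcal{E}_{\ge d}/\mathcal{E}_{>d}\cong\mathcal{O}(d)^{r}$ carries an induced real structure. Its space of sections is $H^0=\C^r\otimes H^0(\mathcal{O}(d))$, and it has a real form because $\sigma$ acts antilinearly on it.
\item Twisting by $\mathcal{O}(-d)$ with its standard real structure, I will choose $r$ $\sigma$-invariant sections spanning each fibre. This is possible because the real points of a trivial bundle with real structure span.
\item I will lift these sections to $\mathcal{E}_{\ge d}$ using $H^1(\mathcal{O}(d'-d))=0$ for $d'>d$, and average the lifts with $\sigma$ to keep them invariant.
\end{itemize}

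Writing the resulting frame on $\D$ as the columns of $\Theta$, its $\sigma$-image is a frame on $\D_\infty$ given by $\overline{\Theta(1/\bar\zeta)}$. The standard real structure on $\mathcal{O}(\kappa)$ has transition $\zeta^\kappa$, so the transition matrix between the two frames is $\Lambda$. This yields $B=\Theta\Lambda\overline{\Theta}^{-1}$ on $\partial\D$, which is the desired identity $A\overline{A^{-1}}=\Theta\Lambda\overline{\Theta^{-1}}$.

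Finally, I need $\Theta$ to be $\mathcal{C}^\alpha$ up to $\partial\D$. This follows because the frame sections are solutions of the Riemann--Hilbert problem with $\mathcal{C}^\alpha$ data, and the Cauchy integral preserves Hölder classes.
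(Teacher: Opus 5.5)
Your argument is correct, but it is not the route the paper takes. The paper gives no proof of its own: it imports the statement from Globevnik, whose argument, as the paper's remark says, rests on Vekua's analytic theory of vector Riemann--Hilbert problems. That theory produces a global factorization of the H\"older matrix function $A\overline{A^{-1}}$ via singular integral equations, and the symmetry $B\overline{B}=I$ is then exploited analytically.

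You use analysis only locally, to make the clutched sheaf locally free with $\mathcal{C}^\alpha$ frames, and take the global structure from complex geometry:
\begin{itemize}
\item Grothendieck's theorem gives the $\kappa_i$ and their uniqueness.
\item A $\sigma$-equivariant splitting built down the Harder--Narasimhan filtration gives the conjugate-symmetric form $\Theta_-(\zeta)=\overline{\Theta(1/\bar\zeta)}$, hence $B=\Theta\Lambda\overline{\Theta}^{-1}$ on $\partial\D$. The filtration is best described canonically as the image of $H^0(\mathcal{E}(-d))\otimes\mathcal{O}(d)\to\mathcal{E}$, which makes its $\sigma$-invariance clear. Lifts exist since $H^1(\mathcal{O}(d'-d))=0$, and averaging keeps them invariant.
\end{itemize}
The compatibility of $\sigma$ with the gluing is exactly $B\overline{B}=I$, and your transition computation is right.

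What your route buys is conceptual. Your $(\mathcal{E},\sigma)$ is precisely the doubled bundle $E^D$ of Definition~\ref{def:doubundle}, whose real points over $\partial\D$ are $A(\zeta)\R^m$. So your proof also justifies the paper's later claim $E^D\cong\oplus_k\mathcal{O}(\kappa_k)$, shows the splitting can be chosen compatibly with the real structure, and explains uniqueness of the partial indices. The analytic route stays within H\"older function theory and gives existence and boundary regularity of the factorization in one stroke.

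The one step that needs more than you say is the local one. For a matrix-valued jump, Cauchy integrals with Plemelj--Privalov only solve the additive problem. The multiplicative problem $B=F_+F_-^{-1}$ near $\zeta_0$ needs a fixed-point argument:
\begin{itemize}
\item Write $\tilde B=B(\zeta_0)(I+N)$, and let $P_-$ be the Riesz projection onto negative Fourier modes, which is bounded on $\mathcal{C}^\alpha$ by Privalov.
\item Solve $W=-P_-(N)-P_-(NW)$ by contraction, and set $F_-=I+W$, $F_+=\tilde BF_-$.
\end{itemize}
The contraction requires $\|N\|_{\mathcal{C}^\alpha}$ to be small. Merely restricting to a short arc does not give this, because the H\"older seminorm need not shrink there. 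It does become small after a M\"obius automorphism of $\D$ blowing a $\delta$-arc up to a fixed arc, since the seminorm then scales like $\delta^\alpha$; follow this with a cutoff extension to the whole circle. With that in place, your claims about $\mathcal{C}^\alpha$ regularity and invertibility of $\Theta$ up to $\partial\D$ hold. The reason is that the global frame is a local $\mathcal{C}^\alpha$ frame times a matrix holomorphic across the arc.
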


\begin{rmk}
    This theorem is based on solving a Riemann-Hilbert problem for vector functions proved by \cite{vekua1967systems}.
\end{rmk}

Since $\mathrm{U}(m)$ is a subgroup of $\mathrm{GL}(m,\C)$ and $\mathrm{U}(m)/\mathrm{O}(m)$ is identified with the Lagrangian Grassmannian space of $\C^m$ (\cite{mcduff2017introduction}), so the following is true.

\begin{cor}[{\cite[Theorem~4.7]{oh1995riemann}}]\label{cor:ohsplitting}
    Let $\tau:\partial\D\rightarrow\mathrm{U}(m)/\mathrm{O}(m)$ be a map of class $\mathcal{C}^\alpha$, $0<\alpha<1$. Then there is a map $\Theta:\D\rightarrow\mathrm{GL}(m,\C)$ of class $\mathcal{C}^\alpha$ on $\partial\D$, holomorphic on $\D$ such that
    \[
    \tau(\zeta)=\Theta(\zeta)\Lambda^\frac12(\zeta)\R^m\ \ (\zeta\in\partial\D),
    \]
    where
    \[
    \Lambda(\zeta)=\begin{bmatrix}
        \zeta^{\kappa_1}&0&\ldots&0\\
        0&\zeta^{\kappa_2}&\ldots&0\\
        ..&..&..&..\\
        0&0&\ldots&\zeta^{\kappa_m}
    \end{bmatrix},
    \]
    with $\kappa_1\ge\kappa_2\ge\ldots\ge\kappa_m$. The numbers $\kappa_1,\ldots,\kappa_m$ are defined as the \textbf{partial Maslov indices} and $\sum_k\kappa_k=\mu(\tau)$.
\end{cor}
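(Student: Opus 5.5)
The plan is to derive the corollary from the Birkhoff factorization (the Proposition preceding it, from \cite{globevnik1994perturbation}) by lifting $\tau$ to the unitary group and reading off the result.

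First, lift $\tau$. Since $\mathrm{U}(m)\to\mathrm{U}(m)/\mathrm{O}(m)$ is a fibre bundle and $\partial\D$ is one-dimensional, $\tau$ lifts locally to a $\mathcal{C}^\alpha$ map into $\mathrm{U}(m)$. Globally the lift may be obstructed, but the composite $\zeta\mapsto A(\zeta)A(\zeta)^T$ is always well defined. Here $A(\zeta)\in\mathrm{U}(m)$ is any local lift with $\tau(\zeta)=A(\zeta)\R^m$, and the composite does not depend on the lift because $O\,O^T=I$ for $O\in\mathrm{O}(m)$. For unitary $A$ we have $A^T=\overline{A}^{-1}=\overline{A^{-1}}$, so
\[
B(\zeta):=A(\zeta)\overline{A(\zeta)^{-1}}
\]
is a globally defined $\mathcal{C}^\alpha$ map $\partial\D\to\mathrm{GL}(m,\C)$, and it determines $\tau$.

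Next, apply the factorization. The proof of \cite[Lemma~5.1]{globevnik1994perturbation} only uses the quantity $A\overline{A^{-1}}$, so it applies to $B$ directly. It gives a map $\Theta$, holomorphic on $\D$, of class $\mathcal{C}^\alpha$ up to the boundary, with $B=\Theta\Lambda\overline{\Theta^{-1}}$ on $\partial\D$ and with $\kappa_1\ge\dots\ge\kappa_m$ uniquely determined.

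To recover $\tau$, choose $\zeta^{1/2}$ locally. Then $\Lambda=\Lambda^{1/2}\overline{\Lambda^{1/2}}^{-1}$ on $\partial\D$, because $|\zeta|=1$. Writing $C=\Theta\Lambda^{1/2}$, this gives $B=C\,\overline{C}^{-1}$. A subspace $V=M\R^m$ with $M\in\mathrm{GL}(m,\C)$ is totally real, and it is recovered from $M\overline{M}^{-1}$ as the fixed set of the antilinear involution $v\mapsto M\overline{M}^{-1}\overline{v}$. The subspace $\tau(\zeta)=A\R^m$ is the fixed set of $v\mapsto B\overline v$, and $C\R^m$ is the fixed set of the same involution. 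Hence $\tau(\zeta)=\Theta(\zeta)\Lambda^{1/2}(\zeta)\R^m$. The sign ambiguity in $\zeta^{1/2}$ only changes $\R^m$ coordinatewise by $\pm1$, so the subspace is well defined.

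Finally, compute the index sum. We have
\[
\mu(\tau)=\deg\det(A)^2=\deg\det B=\deg\det\Theta+\sum\kappa_k-\deg\det\overline{\Theta}.
\]
The function $\det\Theta$ is holomorphic and nonvanishing on the closed disc, so its winding number along $\partial\D$ is $0$, and the same holds for its conjugate. Therefore $\sum_k\kappa_k=\mu(\tau)$.

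The main obstacle is the regularity and global well-definedness of $B$ when $\tau$ has no global unitary lift; I handle it by working with $B=AA^T$ throughout. Everything else is formal.
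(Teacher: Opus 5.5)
Your proposal is correct and follows the same route the paper indicates. You represent the loop in $\mathrm{U}(m)/\mathrm{O}(m)$ by unitary (hence $\mathrm{GL}(m,\C)$) matrices, apply the Birkhoff factorization to $A\overline{A^{-1}}$, and then read off $\tau=\Theta\Lambda^{\frac12}\R^m$ and $\sum_k\kappa_k=\mu(\tau)$ from determinants. Working with the globally defined $B=AA^T$ is a worthwhile addition: it satisfies $B\overline{B}=I$, which is the property the factorization actually uses, and a global unitary lift does not exist when $\mu(\tau)$ is odd, a case the paper's one-line justification passes over.
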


\begin{rmk}
    \begin{itemize}
        \item[1.] The entry in matrix $\Lambda^\frac12(\zeta)$ is not well-defined for $\kappa_k$ being odd since $\zeta^{\frac{\kappa_k}{2}}$ is double valued. However, the two branches are differed by multiplying $-1$. Therefore, $\Lambda^\frac12(\zeta)\R^m$ is well-defined in the Lagrangian Grassmannian space.
        \item[2.] In \cite{oh1995riemann}, Oh proved the above corollary by assuming $A$ is a smooth loop. But $A$ only needs to be $\mathcal{C}^\alpha$, $0<\alpha<1$.
    \end{itemize}
\end{rmk}

In this paper, the disc $\D$ is in a K\"ahler manifold with boundary $\partial\D$ in a union of transversely intersecting Lagrangian submanifolds. Therefore, $\partial\D$ induces a loop in $\mathrm{U}(m)/\mathrm{O}(m)$ with discontinuities at the intersection points of the Lagrangians. To define the partial Masolv indices for this loop, we replace it by the loop in Definition \ref{def:msl2} and the resulting loop is piecewise smooth.

\begin{defn}\label{def:parmas}
    Suppose $L_k$ with $k=1,\dots,n$ is a sequence of Lagrangians in a K\"ahler manifold $(X,\omega, J)$ intersecting transversely. Let $u:\D\rightarrow X$ be a disc connecting the intersections $\vec x,y$ of $\{L_k\}_k$, then the \textbf{partial Maslov index of $\mathbf{u}$} is defined as the partial Maslov indices of $v_1\circ\ldots\circ v_n$ $(v_1\circ\ldots\circ v_n$ is defined in Definition \ref{def:msl2}$)$.
\end{defn}

In fact, by pulling back $T^{1,0}X$ to $\D$ through $u$ (in the above definition), one obtains a holomorphic vector bundle $E$ according to Lemma \ref{lem:holvec}. On the boundary $\partial\D$, we get a sequence of Lagrangian subbundles $F_k$ defined as the pullbacks of $T^{1,0}L_k|_{u(\partial\D)\cap L_k}$, $k=1,\ldots,n$. Here 
\begin{equation}\label{equ:10lagtan}
    T^{1,0}L_k:=\{s-iJs|\ s\in TL_k\}.
\end{equation}
The proposition below shows that there is a holomorphic bundle isomorphism transforms the loop in $\mathrm{U}(m)/\mathrm{O}(m)$ determined by the subbundles $F_k$ into diagonal form.

\begin{prop}\label{prop:standform}
    Let $E$ be a holomorphic vector bundle over $\D$ with fiber $\C^m$. Denote $I_k$, $k=1,\ldots,n$, as the closed intervals whose interiors are identified with the connected components of $\partial\D$ with finitely many points removed. Assume $I_k$ is ordered counterclockwisely. Suppose that $F_k$ is a Lagrangian subbundle of $E|_{I_k}$ over $I_k$ for each $k$. Then there is a map $\Theta:\D\rightarrow\mathrm{GL}(m,\C)$ holomorphic in the interior of $\D$ such that $\Theta^{-1}(F_k)$ is of the form
    \[
    \begin{bmatrix}
        f_k^1(\zeta)&0&\ldots&0\\
        0&f_k^2(\zeta)&\ldots&0\\
        ..&..&..&..\\
        0&0&\ldots&f_k^m(\zeta)
    \end{bmatrix}\R^m,\ \ \zeta\in I_k,
    \]
    in the Lagrangian Grassmannian space $\mathrm{U}(m)/\mathrm{O}(m)$.
\end{prop}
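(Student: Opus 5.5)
Since $\D$ is contractible, the plan is to reduce to Corollary \ref{cor:ohsplitting} in three steps: trivialize $E$, close the piecewise boundary data into one continuous loop using the corner-smoothing of Definition \ref{def:msl2}, and then undo that smoothing on each arc $I_k$.

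First I trivialize. The base $\D$ is contractible and Stein, so the holomorphic bundle $E$ is holomorphically trivial. I fix a holomorphic frame $E\cong \D\times\C^m$. In this frame each $F_k$ becomes a $\mathcal{C}^\alpha$ map $\tau_k: I_k\to \mathrm{U}(m)/\mathrm{O}(m)$. Strictly speaking, a Lagrangian subbundle here means Lagrangian for the standard form after trivialization. If the frame is only holomorphic and not unitary, I would first apply a Gram--Schmidt type smooth change of frame on the boundary to identify the totally real subspaces with points of $\mathrm{GL}(m,\C)/\mathrm{GL}(m,\R)$. The factorization of Globevnik works in that setting as well.

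Second I glue. The loop $\tau_1,\dots,\tau_n$ generally jumps at the corner points $w_k$. I rotate each $\tau_k$ by the diagonal matrix $\Phi_k(t)=e^{-\frac{\pi i}{2|I_k|}t}\mathrm{Id}$ of Definition \ref{def:msl2}, after first composing with a constant unitary change at each corner so that the two adjacent subspaces become $\R^m$ and $i\R^m$, as in the Darboux normalization. This produces a single loop $\tau=v_1\circ\cdots\circ v_n$ that is continuous, in fact piecewise smooth and hence $\mathcal{C}^\alpha$. I then apply Corollary \ref{cor:ohsplitting} to $\tau$. It gives a $\Theta$ holomorphic on $\D$, $\mathcal{C}^\alpha$ up to $\partial\D$, with $\tau(\zeta)=\Theta(\zeta)\Lambda^{1/2}(\zeta)\R^m$.

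Third I unwind the rotation. On $I_k$ we have $\tau_k(\zeta)=\Phi_k(\zeta)^{-1}\tau(\zeta)$. Since $\Phi_k$ is scalar, it commutes with $\Theta$, so
\[
\Theta(\zeta)^{-1}\tau_k(\zeta)=\Phi_k(\zeta)^{-1}\Lambda^{1/2}(\zeta)\R^m,\qquad \zeta\in I_k .
\]
This is diagonal with entries $f_k^j(\zeta)=e^{\frac{\pi i}{2|I_k|}t(\zeta)}\zeta^{\kappa_j/2}$. The normalizing constant unitary matrices at the corners can be absorbed into the frame, as long as one uses a single frame change, which is possible only if it is compatible along the loop. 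The cleanest choice is to skip the corner normalization entirely and instead use the short paths $\lambda_{x_k}$, which close the loop. Then on each $I_k$ the original $\tau_k$ agrees with the closed loop $\tau$ restricted to the subinterval $I_k$, and the claim follows with $f_k^j=\zeta^{\kappa_j/2}$ on that subinterval, after reparametrizing the arcs.

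The main obstacle is this second step. Because the corners are not normalized globally, the operation relating $\tau_k$ to $\tau$ must commute with $\Theta$ so that the diagonal form survives. My plan is to use the short-path concatenation with reparametrization, which is the identity on the arcs, so nothing needs to commute with $\Theta$. Regularity at the corners must also be checked, namely that the glued loop is $\mathcal{C}^\alpha$ so Globevnik applies. This holds because the glued loop is piecewise smooth and continuous.
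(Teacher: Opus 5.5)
\paragraph{Where the proof breaks.} The gap is in the version of Step 3 you finally adopt. To put $\tau_1,\lambda_{x_1},\tau_2,\dots,\tau_n,\lambda_y$ on $\partial\D$ as one continuous loop, you must compress each $\tau_k$ onto a shorter arc $I_k'\subsetneq I_k$ by a reparametrization $\rho_k\colon I_k'\to I_k$, leaving small arcs around the corners $w_k$ for the short paths. Applying Corollary \ref{cor:ohsplitting} to that loop gives $\Theta(\zeta)^{-1}F_k(\rho_k(\zeta))=\Lambda^{1/2}(\zeta)\R^m$ for $\zeta\in I_k'$. So $\Theta$ and $F_k$ are evaluated at different points. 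On $I_k\setminus I_k'$, where your loop is a short path, you learn nothing about $\Theta^{-1}F_k$.

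The proposition is about $\Theta(\zeta)^{-1}F_k(\zeta)$ for every $\zeta\in I_k$, and the reparametrization cannot be pushed into $\Theta$. The boundary map that equals $\rho_k$ on each $I_k'$ and collapses each corner arc to $w_k$ is not even injective. So it is not the boundary value of a biholomorphism of $\D$, and $\Theta\circ\rho_k^{-1}$ has no reason to extend holomorphically. Your sentence ``the concatenation is the identity on the arcs, so nothing needs to commute with $\Theta$'' is exactly what fails: you have factored a different Riemann--Hilbert problem. This matters downstream. In Theorem \ref{thm:main} the condition $F_k(\zeta)=T^{1,0}_{u_0(\zeta)}L_k$ is tied to the actual point $\zeta$, so holomorphic sections of the reparametrized problem are not sections of $E$ with boundary in $F_k$.

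\paragraph{Comparison with the paper.} Your first version of Steps 2--3 is exactly the paper's proof: the scalar $\Phi_k$ from Definition \ref{def:msl2}, then Corollary \ref{cor:ohsplitting}, then commuting $\Phi_k$ past $\Theta$. Your suspicion of it is justified, because the paper simply asserts that $v_1\circ\cdots\circ v_n$ closes up. Since $\Phi_k$ ends at $-i\,\mathrm{Id}$, continuity at $w_k$ requires $F_{k+1}(w_k)=iF_k(w_k)$, i.e.\ $T_{x_k}L_{k+1}=J\,T_{x_k}L_k=(T_{x_k}L_k)^{\perp}$.

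Complex-linear maps commute with $i$, so this condition reads the same in every holomorphic frame. No choice of trivialization, and no corner-by-corner unitary normalization, can create it. The Darboux normal form $\R^m$, $i\R^m$ behind Definition \ref{def:msl2} is symplectic but not complex-linear. That is harmless for the Maslov index but not for the partial indices.

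Consequently the scalar-rotation argument works when consecutive Lagrangians meet orthogonally at every corner. More generally it works whenever $F_{k+1}(w_k)=e^{i\theta_k}F_k(w_k)$ for a single angle $\theta_k$, using a scalar rotation ending at $e^{i\theta_k}\mathrm{Id}$. If the K\"ahler angles at a corner are not all equal, no scalar rotation closes the loop. In that case the corners are genuine jump discontinuities of the boundary data, and handling them needs more than Corollary \ref{cor:ohsplitting}, for instance a factorization allowing singular factors at the $w_k$. Reparametrizing the arcs does not achieve this.
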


\begin{proof}
    Since $F_k$ is a Lagrangian subbundle of $E|_{I_k}$, then $F_k$ induces a path in the Lagrangian Grassmannian $\tau_k:I_k\rightarrow\mathrm{U}(m)/\mathrm{O}(m)$ with $k=1,\ldots,n$. As done in Definition \ref{def:msl2}, by multiplying $\tau_k$ with a family of diagonal matrix $\Phi_k\subset\mathrm{GL}(m,\C)$ parametrized by $I_k$, we get $n$ paths $v_k$ such that $\tau:=v_1\circ v_2\circ\ldots\circ v_n$ is a piecewise smooth loop in $\mathrm{U}(m)/\mathrm{O}(m)$. According to Corollary \ref{cor:ohsplitting}, there is a holomorphic map $\Theta:\D\rightarrow\mathrm{GL}(m,\C)$ such that 
    \[
    \tau(\zeta)=\Theta(\zeta)\Lambda^\frac12(\zeta)\R^m\ \ (\zeta\in\partial\D),
    \]
    where
    \[
    \Lambda(\zeta)=\begin{bmatrix}
        \zeta^{\kappa_1}&0&\ldots&0\\
        0&\zeta^{\kappa_2}&\ldots&0\\
        ..&..&..&..\\
        0&0&\ldots&\zeta^{\kappa_m}
    \end{bmatrix}.
    \]
    Therefore 
    \[
    (\Theta^{-1}\Phi_kF_k)(\zeta)=\Lambda^\frac12(\zeta)\R^m,\ \ \zeta\in I_k.
    \]
    Since $\Phi_k$ is a multiple of the identity matrix, $\Theta^{-1}\Phi_k=\Phi_k\Theta^{-1}$. This shows that $\Theta^{-1}(F_k)$ is of the desired form.
\end{proof}



The reminder of this section gives an equivalent description of the positivity of the partial Maslov indices. 

The first step is to deform the subbundles $F_k$ so that we can construct a holomorphic bundle over the 2-sphere out of $E$, where $E$ and $F_k$ are defined in the previous proposition. Since $F_k$ and $F_{k+1}$ do not agree at the endpoints, this issue can be fixed by multiplying a family of diagonal matrix $\Phi_k\subset\mathrm{GL}(m,\C)$ to $F_k$, as done in Definition \ref{def:msl2}. Denote the resulting deformed subbundles by $F'_k$, for $k=1,\ldots,n$. Then $F'_k$ can be patched together to get a Lagrangian subbundle $F$ of $E|_{\partial\D}$ that is piecewise smooth. By taking the conjugate bundle $\overline{E}$ over $\overline{\D}$, then $E$ and $\overline{E}$ can be glued along the boundary to obtain a bundle $E^D$ over $S^2$ ($F$ and $\overline{F}(={F})$ coincide). This bundle $E^D$ is holomorphic. In fact, using Proposition \ref{prop:standform}, the trivializations of $E^D$ are given by $\Theta^{-1}$ over $\D$, $\overline{\Theta}$ over $\overline{\D}$, and $\Lambda$ over a thickened neighborhood of $\partial\D$. This implies that $E^D$ is determined by the clutching function $\begin{bmatrix}
        \zeta^{\kappa_1}&0&\ldots&0\\
        0&\zeta^{\kappa_2}&\ldots&0\\
        ..&..&..&..\\
        0&0&\ldots&\zeta^{\kappa_m}
\end{bmatrix}$. Therefore $E^D$ is the holomorphic bundle $\oplus_k\mathcal{O}(\kappa_k)$. Here $\mathcal{O}(\kappa_k)$ stands for the degree $\kappa_k$ holomorphic line bundle over $S^2=\C\mathrm P^1$.\\

\begin{defn}\label{def:doubundle}
    Let $E$ and $F_k$ be as above with $k=1,\ldots,n$. The holomorphic bundle $E^D$ over $S^2$ constructed as above is called the \textbf{doubled holomorphic bundle}.
\end{defn}


The positivity of the partial Maslov indices requires the following concepts.

\begin{defn}\label{def:grfpos}
    Let $E$ be a hermitian holomorphic vector bundle over a complex K\"ahler manifold $X$ with the hermitian structure $h$. The \textbf{Chern connection} is the unique connection compatible with the hermitian structure and the holomorphic structure on $E$. Denote the $\mathrm{End}(E)$-valued $(1,1)$-form $\mathcal{K}$ as the corresponding \textbf{Chern curvature tensor}. The bundle $E$ is said to be \textbf{Griffiths positive} if at any $x\in X$,
    \[
    \langle\mathcal{K}\xi,\xi\rangle_h(s,\bar s)>0,\ \forall s\in T_xX\ and\ \xi\in E_x.
    \]
\end{defn}

The next proposition is important for Griffiths positivity.

\begin{prop}[{\cite[Proposition~6.10]{demailly1997complex}}]
    The quotient of a Griffiths positive hermitian vector bundle is Griffiths positive as well.
\end{prop}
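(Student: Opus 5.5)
The statement cited is Demailly's Proposition 6.10. I would prove it directly from the second fundamental form description of the quotient's Chern curvature.

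\emph{Setup.} Let $(E,h)$ be a Griffiths positive hermitian holomorphic bundle over $X$. Let
\[
0\to S\to E\to Q\to 0
\]
be an exact sequence of holomorphic bundles. Equip $Q$ with the quotient metric, obtained by identifying $Q$ with the orthogonal complement $S^\perp$ as $C^\infty$ bundles. Let $D_E$ be the Chern connection of $E$. With respect to the $C^\infty$ splitting $E=S\oplus S^\perp$, write
\[
D_E=\begin{bmatrix} D_S & -\beta^*\\ \beta & D_Q\end{bmatrix}.
\]
Here $D_S$ and $D_Q$ are the Chern connections of $S$ and $Q$. The second fundamental form $\beta$ is a $(1,0)$-form with values in $\mathrm{Hom}(S,Q)$, and $\beta^*$ is its adjoint, a $(0,1)$-form. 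The first step is to verify this block form. Compatibility of $D_E$ with $h$ forces the off-diagonal blocks to be mutually adjoint up to sign. Holomorphicity of $S$ means $\bar\partial_E$ preserves $S$, so the lower-left block has no $(0,1)$-part. The diagonal blocks are then metric connections whose $(0,1)$-parts are the holomorphic structures, hence they are the Chern connections.

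\emph{Curvature of $Q$.} Next compute $\mathcal K_E=D_E^2$ and read off its $Q$-block:
\[
\mathcal K_E|_Q=\mathcal K_Q-\beta\wedge\beta^*.
\]
Now pair with $\xi\in Q_x\cong S_x^\perp$ and a tangent vector $s$. The term $\langle \beta\wedge\beta^*\,\xi,\xi\rangle(s,\bar s)$ equals $-|\beta^*(\bar s)\xi|^2$, up to the sign conventions for evaluating $(1,1)$-forms on $(s,\bar s)$. This gives
\[
\langle\mathcal K_Q\xi,\xi\rangle_h(s,\bar s)=\langle\mathcal K_E\xi,\xi\rangle_h(s,\bar s)+|\beta^*(\bar s)\xi|^2\ \ge\ \langle\mathcal K_E\xi,\xi\rangle_h(s,\bar s)>0.
\]
The strict inequality holds for $\xi\neq0$ and $s\neq0$ by Griffiths positivity of $E$. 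This is exactly Griffiths positivity of $Q$ in the sense of Definition \ref{def:grfpos}.

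\emph{Main obstacle.} The only delicate point is the sign bookkeeping in the second step. One must check that $\beta\wedge\beta^*$, evaluated on $(s,\bar s)$, contributes a nonpositive term to $\mathcal K_E|_Q$, so that it adds nonnegatively to $\mathcal K_Q$. I would fix conventions by checking against the tautological quotient of the trivial bundle on $\C\mathrm P^1$, which is $\mathcal O(1)$. There $\mathcal K_E=0$, so the formula must make $\mathcal K_Q$ positive, and this pins down the sign.
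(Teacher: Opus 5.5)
The paper gives no proof of its own here beyond citing Demailly, and your second-fundamental-form argument via $\mathcal K_Q=\mathcal K_E|_Q+\beta\wedge\beta^*$ is exactly the standard proof behind that citation; it is correct. One sign slip needs fixing: with the paper's convention $dz_j\wedge d\bar z_k(s,\bar s)=s_j\bar s_k$, one has $(\beta\wedge\beta^*)(s,\bar s)=\beta(s)\beta(s)^*$, so $\langle\beta\wedge\beta^*\xi,\xi\rangle_h(s,\bar s)=+|\beta(s)^*\xi|^2$, not $-|\cdot|^2$, and it is this plus sign (which your $\mathcal O(1)$ check confirms) that makes your displayed inequality follow.
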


\begin{thmbis}{thm:main2}
    Suppose $L_k$ with $k=1,\dots,n$ is a sequence of Lagrangians in a K\"ahler manifold $(X^m,\omega, J)$ intersecting transversely. Let $u:\D\rightarrow X$ be a disc connecting the intersections $\vec x,y$ of $\{L_k\}_k$. Let $E^D$ be the doubled holomorphic bundle defined as in Definition \ref{def:doubundle}, where $E=u^*T^{1,0}X$ and the boundary Lagrangian subbundle $F_k$ is $u^*T^{1,0}L_k|_{u(\partial\D)\cap L_k}$ $($Equation \ref{equ:10lagtan}$)$ for $k=1,\ldots,m$. Then the partial Maslov indices of $u$ are all larger than $1$ if and only if $E^D$ is Griffiths positive.
\end{thmbis}

\begin{proof}
    Let $\kappa_k$ be the partial Maslov indices of $u$, $k=1,\ldots,m$. By the definition of $E^D$, 
    \[
    E^D=\oplus_k\mathcal{O}(\kappa_k),
    \]
    where $\mathcal{O}(\kappa_k)$ is the degree $\kappa_k$ holomorphic line bundle over $S^2$. Therefore, it remains to show that $E^D$ is Griffiths positive if and only if $\kappa_k>0$ for all $k$.

    Suppose first that $E^D$ is Griffiths positive. The above proposition implies that each $\mathcal{O}(\kappa_k)$ is Griffiths positive. Therefore, $\mathcal{O}(\kappa_k)$ is ample and so $\kappa_k>0$, for all $k$.

    The converse is automatically true since the direct sum of Griffiths positive bundles is still Griffiths positive (because the Chern curvature tensor is diagonal after performing direct sum). 
\end{proof}

\section{The existence criterion of pseudo-holomorphic discs}\label{sec:3}

The aim of this section is to prove Theorem \ref{thm:main}. The method of proving the theorem is motivated by Chen-Fraser \cite{chen2015minimal} and Chen \cite{chen2025holomorphic}.

\begin{thmbis}{thm:main}
    Given a closed K\"ahler manifold $(X,\omega,J)$ and a sequence of Lagrangians $L_k$ with $k=1,\dots,n$ intersecting transversely in $X$, suppose there is a smooth disc $u$ connecting $\vec x=(x_1,\ldots,x_{n-1})$ and $y\in L_1\cap L_n$ with $x_k\in L_k\cap L_{k+1}$ for $k=1,\ldots,n-1$ $($Definition \ref{def:holodisc}$)$. Then there is a minimal disc $u_0$ connecting $\vec x$ and $y$ homotopic to $u$. If in addition, the partial Maslov indices $($Definition \ref{def:parmas}$)$ of $u_0$ are either all no less than $1$ or all no larger than $-1$, then there is a holomorphic map from $\D$ to $X$ with the same image as $u_0$.
\end{thmbis}

Before proving the above theorem, we need the following key property from \cite[Theorem~3.2]{chen2015minimal} and {\cite[Theorem~1.1(1)]{chen2015minimal}}.
 \begin{prop}[{\cite[Theorem~3.2]{chen2015minimal}} {\cite[Theorem~1.1(1)]{chen2015minimal}}]
    Let $L_k$, $k=1,\ldots,m$, be oriented Lagrangian submanifolds in an $m$-dimensional K\"ahler manifold $X$ and these Lagrangian submanifolds intersects each other transversely. Let $u_0:\D\rightarrow X$ be a minimal immersion such that $u_0(\partial\D)\subset \cup L_k$ and $u_0(\D)$ meets $\cup L_k$ orthogonally along $u_0(\D)$. Then if there exist $m$ 
    {holomorphic sections} of $E=u_0^*T^{1,0}X$ with boundary in $F_k:=u_0^*T^{1,0}L_k|_{u_0(\partial\D)\cup L_k}$ $($Equation \ref{equ:10lagtan}$)$ that are linear independent over $\R$ at some point $p\in\partial\D$ other than the intersections of $\{L_k\}_k$, then $u_0$ is holomorphic.
\end{prop}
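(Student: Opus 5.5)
The plan is to adapt the Chen--Fraser argument: produce, from each holomorphic section, a holomorphic function on $\D$ that measures the failure of $u_0$ to be holomorphic. Then use the Lagrangian and orthogonality conditions on $\partial\D$ to force these functions to vanish.

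\textbf{Step 1: an antiholomorphic section.} Since $u_0$ is minimal, and after reparametrization conformal (Theorem \ref{thm:areamini}), it is harmonic. So $\nabla_{\partial_{\bar z}}\partial_z u_0=0$ in $u_0^*TX\otimes\C$. Because $X$ is K\"ahler, $\nabla J=0$, so the projections to $T^{1,0}X$ and $T^{0,1}X$ commute with $\nabla$. Hence $\beta:=(\partial_{\bar z}u_0)^{1,0}$ satisfies $\nabla_{\partial_z}\beta=0$, i.e.\ $\beta$ is an antiholomorphic section of $E$. Note that $u_0$ is holomorphic exactly when $\beta\equiv0$.

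\textbf{Step 2: a holomorphic function and its boundary values.} For a holomorphic section $\xi$ of $E$ with $\xi|_{I_k}\in F_k$, set $f_\xi:=h(\xi,\beta)$, where $h$ is the hermitian metric (linear in the first slot). Then $\partial_{\bar z}f_\xi=h(\bar\partial_E\xi,\beta)+h(\xi,\nabla_{\partial_z}\beta)=0$, so $f_\xi$ is holomorphic on $\D$.

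On $I_k$, write $z=e^{i\theta}$. Then $\partial_{\bar z}u_0=\tfrac{\bar z}{2}\,(\partial_ru_0+i\,\partial_\theta u_0)$ up to normalization. Here $\partial_\theta u_0\in TL_k$ by the boundary condition, and $\partial_r u_0\perp TL_k$ by Corollary \ref{cor:perpbd}; since $L_k$ is Lagrangian, this means $\partial_ru_0\in J\,TL_k$. Writing $\xi=\tfrac12(s-iJs)$ with $s\in TL_k$ and expanding $h$ through $g$ and $\omega$, I expect to get $\mathrm{Im}\,(z f_\xi)=0$ on each open arc $I_k$, possibly after multiplying by the explicit unimodular factors that already appear in $\Phi_k$.

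\textbf{Step 3: vanishing of $z f_\xi$.} The holomorphic function $zf_\xi$ is real on $\partial\D$ minus the $n$ corner points. Near the corners, transversality and the regularity of $u_0$ should make $zf_\xi$ bounded; I would use the fact that $\partial u_0$ vanishes or is controlled at the corners, since $u_0$ is a minimizer taking fixed values there. Schwarz reflection then gives a bounded holomorphic function on the Riemann sphere, hence a real constant $c$. Since $zf_\xi$ vanishes at $z=0$, we get $c=0$ and so $f_\xi\equiv0$.

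\textbf{Step 4: conclusion.} Now $h(\xi_j,\beta)=0$ for $m$ sections $\xi_1,\dots,\xi_m$ that are $\R$-independent at $p\in\partial\D$. Since the $\xi_j(p)$ span the totally real subspace $F_k(p)$, they span $E_p$ over $\C$, and hence do so on a neighbourhood $U$ of $p$ in $\D$. Therefore $\beta=0$ on $U$. As $\beta$ is antiholomorphic, unique continuation gives $\beta\equiv0$, so $u_0$ is holomorphic.

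The main obstacle is Step 2 together with the corner analysis in Step 3. The reality condition on each arc must be derived with consistent phase conventions across different Lagrangians, and boundedness of $zf_\xi$ at the intersection points must be justified so that the reflection argument has no poles. I would first attempt the computation in local Darboux charts where $L_k=\R^m$ and $L_{k+1}=i\R^m$, checking that the boundary phases match up.
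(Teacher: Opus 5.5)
Your outline is the natural Chen--Fraser-type argument. The paper itself gives no proof beyond citing \cite{chen2015minimal} and asserting that the identical argument works. Steps 1, 2 and 4 are sound, and the computation you left open in Step 2 closes without any $\Phi_k$ factors. Take $h(\xi,\eta)=g(\xi,\bar\eta)$ with $g$ extended complex-bilinearly, and $\xi=\tfrac12(s-iJs)$ with $s\in TL_k$. Using $\partial_\theta u_0\in TL_k$ and $\partial_r u_0\in J\,TL_k$, one gets on the open arc $I_k$
\[
zf_\xi=-\tfrac{i}{4}\bigl(g(s,\partial_\theta u_0)+g(Js,\partial_r u_0)\bigr).
\]
So $\mathrm{Re}(zf_\xi)=0$ on every arc, with one and the same phase; the only phase comes from $dz$, and your worry about matching conventions across different Lagrangians is unfounded.

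The genuine gap is Step 3. This is exactly where several Lagrangians differ from one: for a single Lagrangian, $zf_\xi$ is continuous on $\overline{\D}$ and the maximum principle for $\mathrm{Re}(zf_\xi)$ finishes. The paper also passes over this point.

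Bounding $\partial u_0$ at the corners cannot work, because at a transverse corner $du_0$, and even $\beta$, typically blows up. For example, $\bar\zeta^{1/2}$ on the upper half-disc is conformal and harmonic, has boundary on $\R$ and $i\R$, and meets them orthogonally, yet $|\beta|\sim|\zeta|^{-1/2}$.

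Some growth condition is also indispensable. The function $G(z)=\frac{w_1+z}{w_1-z}-\frac{w_2+z}{w_2-z}$ is holomorphic on $\D$, purely imaginary on $\partial\D\setminus\{w_1,w_2\}$, and vanishes at $0$, but is not zero.

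The missing idea is to use finite energy instead of pointwise bounds. You have $|zf_\xi|\le|\xi|\,|\beta|\le C|\xi|\,|du_0|$. Suppose the sections are bounded near the corners. This has to be part of the hypothesis, e.g.\ $\xi$ continuous on $\overline{\D}$, which forces $\xi=0$ at the corners since $F_k\cap F_{k+1}=0$, and which you also use in Step 4. Then $zf_\xi\in L^2(\D)$ because $\mathcal{D}[u_0]<\infty$. Now:
\begin{enumerate}
\item The reflection $F\mapsto-\overline{F(1/\bar z)}$ across the open arcs preserves $L^2$ near $\partial\D$.
\item A holomorphic $L^2$ function on a punctured disc has a removable singularity. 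Indeed, the coefficients of $\zeta^{n}$ for $n\le-1$ must vanish, because $\int|\zeta|^{2n}\,dA=\infty$.
\item Hence the reflected function extends holomorphically to the whole Riemann sphere, is constant, and equals $zf_\xi(0)=0$.
\end{enumerate}
With this replacement your argument is complete.
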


\begin{rmk}
    \cite[Theorem~3.2]{chen2015minimal} and {\cite[Theorem~1.1(1)]{chen2015minimal}} proved the case where there is only one Lagrangian. In addition, the theorem proved in {\cite[Theorem~1.1(1)]{chen2015minimal}} is restricted to K\"ahler surfaces. \textbf{But the identical argument works here.}
\end{rmk}


    

\begin{proof}[Proof of Theorem \ref{thm:main}]
    The existence of the minimal disc is proved in Section \ref{sec:2}. It suffices to prove the second part.
    
    \textbf{Case 1:} all partial Maslov indices are no less than $1$.\\
    The goal of the proof is to find two $\R$-linear independent elements $s_1$ and $s_2$ in the kernel of the linearized operator of $\bar\partial_J$ with prescribed boundary conditions. If $s_1$ and $s_2$ are linearly independent over $\R$ at some point in $\partial \D$, then the proposition above guarantees the existence of a holomorphic disc. 
   
    According to Proposition \ref{prop:standform}, we can assume that the subbundle over $\partial\D$ is of the diagonal form as in Proposition \ref{prop:standform}. For $k$-th coordinate component, the Maslov index is the same as the corresponding partial Maslov index $\kappa_k$. Therefore, the dimension of the holomorphic section with boundary in $\pi_k(F_k)$ is $\kappa_k\ge1$, where $\pi_k$ is projection to the $k$-th component. Note that this space with holomorphic section is non-empty since it contains 0. Besides, any non-trivial section $s_k^0$ for each coordinate is non-constant because the Maslov index on is non-zero. As a result, there is a point $p\in\partial\D$ and $m$ real numbers $a_1,\ldots,a_m$ such that $s_k:=a_ks_k^0e_k|_p$ are $\R$-linear independent, where $e_k$ is the standard basis for $\R^m$. So the previous proposition implies that $u_0$ is holomorphic.

    \textbf{Case 2:} all partial Maslov indices are no lager than $-1$.\\
    In Case 1, $\D$ is in fact equipped with the standard complex structure with coordinate $z=s+it$. In the current case, the complex structure is $i$ on $\D$, but the coordinate is $w=\bar z=s-it$. Then the exact argument in the previous can be applied since in this coordinate, the partial Maslov indices are all no less than $1$ again.
\end{proof}



\begin{rmk}
    The results in this section still hold if the $n$th unit roots in the definition of holomorphic disc connecting $\vec x$ and $y$ are replaced by any other $n$ marked points.
\end{rmk}

We end the paper with proving Corollary \ref{cor:minidisccpm}, which states that all minimal discs in $\C\mathrm{P}^m$ with boundary in $\R\mathrm{P}^m$ are holomorphic.

\begin{proof}[Proof of Corollary \ref{cor:minidisccpm}]
    Let $u$ be a minimal discs in $\C\mathrm{P}^m$ with boundary in $\R\mathrm{P}^n$. Suppose $\tau$ is the anti-symplectic involution induced by taking the complex conjugation on $\C\mathrm{P}^m$. Then $\tau$ maps $u$ to another disc such that the new disc and $u$ can be glued together to get a $S^2$. Then the pullback bundle $E_1$ of $T^{1,0}\C\mathrm{P}^m\cong T\C\mathrm{P}^m$ to this $S^2$ is Griffiths positive since $T\C\mathrm{P}^m$ is Griffiths positive. Notice that the pullback of $T^{(0,1)}\C\mathrm{P}^m$ through $\tau\circ u:\overline{\D}\rightarrow \C\mathrm{P}^m$ is the conjugate bundle of $E:=u^*T^{(0,1)}\C\mathrm{P}^m$. Therefore, $E_1$ is the doubled holomorphic bundle $E^D$ $($Definition \ref{def:doubundle}$)$. After applying Theorem \ref{thm:main2} and Theorem \ref{thm:main}, $u$ must be holomorphic. Thus the corollary follows.
\end{proof}

\bibliographystyle{alpha}
\bibliography{references}

\noindent Qiang Tan,
   School of Mathematical Sciences,
  Jiangsu University, Zhenjiang, Jiangsu 212013, China\\
\noindent Email: tanqiang@ujs.edu.cn\\

\noindent Zuyi Zhang, 
Beijing International Center for Mathematical Research, 
 Peking University, Beijing, 100871, China\\
\noindent Email: zhangzuyi1993@pku.edu.cn\\
\noindent Personal page: sites.google.com/view/zuyizhangmath/home

\end{document}